\documentclass[11pt]{amsart}

\usepackage{amsfonts, amstext, amsmath, amsthm, amscd, amssymb}

\usepackage{graphicx, color}

\usepackage{microtype}

\usepackage[hidelinks,pagebackref,pdftex]{hyperref}

\usepackage{amsfonts}

\hypersetup{
	colorlinks = true,
	urlcolor   = blue,
	citecolor  = blue,
} 
\usepackage{tikz-cd}
\usepackage{verbatim}
\usepackage{float}
\usepackage{makecell}

	\newtheorem{de}{Definition}
	
	\newtheorem{thm}[de]{Theorem}
	\newtheorem{lemma}[de]{Lemma}

	\newtheorem{prop}[de]{Proposition}

\usepackage{amsmath}
\usepackage{amssymb}
\usepackage{graphicx}
\graphicspath{ {Thesis/} }
\usepackage[cmtip,all]{xy}
\usepackage{amsthm}
\usepackage{caption}
\usepackage{subcaption}

\begin{document}

    \title{Twisted Torus Links That Are Unlinks}
    \author{Hong Chang, Thiago de Paiva and Qing Lan}

\address{Hong Chang\\
Beijing International Center for Mathematical Research\\
Peking University\\
Beijing, P.R.China\\
changhong@pku.edu.cn}

\address{Thiago de Paiva\\
Beijing International Center for Mathematical Research\\
Peking University\\
Beijing, P.R.China\\
thhiagodepaiva@gmail.com}

\address{Qing Lan\\
Beijing International Center for Mathematical Research\\
Peking University\\
Beijing, P.R.China\\
lanqing@stu.pku.edu.cn}

\begin{abstract}
A twisted torus link \(T(p,q,r,s)\) is obtained by performing \(s\) full twists on \(r\) adjacent strands of the \((p,q)\)-torus link.
In this paper, we classify twisted torus links that are unlinks.
We give a complete characterization of all parameter families \((p,q,r,s)\) for which the associated twisted torus link is an unlink.
\end{abstract}

\maketitle

\section{Introduction}
Let \(p,q,r,s\) be integers with \(p>1, q\geq 1\), \(2\le r\le p+q\), and \(s\neq0\).
A \emph{twisted torus link} $$T(p,q,r,s)$$ is obtained from the \((p,q)\)-torus link by performing \(s\) full twists on \(r\) adjacent strands.
This construction is illustrated in Figure~\ref{fig:twistedtorus}.
When \(r>p\), the interpretation of the twisting is slightly different and is depicted in Figure~\ref{fig:twistedtorusrlarge}.

Twisted torus links were originally introduced by Dean in his doctoral thesis~\cite{Thesis}, motivated by the problem of determining when Dehn surgery on knots yields Seifert fibered spaces.
Since then, twisted torus links have become an important family of links in low-dimensional topology.
They have been studied from several perspectives, including knot Floer homology~\cite{homology}, bridge spectra~\cite{Bridge}, and Heegaard splittings~\cite{Heegaard}.

Another motivation for studying twisted torus links comes from their role as a tractable subfamily of \emph{generalized T-links}.
T-links, also known as Lorenz links, arise as knotted closed periodic orbits in the Lorenz dynamical system introduced by Lorenz~\cite{Lorenz}.
Birman and Kofman~\cite{BirmanKofman2009} showed that Lorenz links coincide with T-links, which form a combinatorially defined subclass of positive braid links.
By allowing negative twists at the end of the braid, one obtains the family of generalized T-links, which contains twisted torus links as a natural subfamily.
Remarkably, it was proved in~\cite{generalizedtlink} that every link in \(S^3\) is isotopic to a generalized T-link.
As a consequence, the problem of classifying unlinks among generalized T-links (in terms of their combinatorial data) is, in principle, equivalent to classifying all unlinks in \(S^3\).
Within this broader context, twisted torus links provide a natural and accessible testing ground.

The geometric classification of twisted torus links has been studied extensively in the knot case.
In particular, many works have focused on twisted torus knots; see for example
\cite{MR4012126, LeeThiago, tangle, Pai1, MR3801447, twofulltwists, Guntel, nextsimplest, MR2583175, unexpected, dePaiva2023Torus}.
Most notably, Sangyop Lee~\cite{Lee} completely classified twisted torus knots that are unknots, identifying seven explicit families of parameters \((p,q,r,s)\) for which the corresponding twisted torus knot is trivial (see Theorem~\ref{lee}).

In contrast, much less is known in the link case.
For twisted torus links, de Paiva~\cite{Pai1} classified hyperbolic examples with \(|s|>3\), but this represents only an initial step toward a broader geometric classification.
In particular, a complete understanding of which twisted torus links are unlinks has not previously been available.

\begin{figure}[h]
    \centering
\definecolor{linkcolor0}{rgb}{0.85, 0.15, 0.15}
\definecolor{linkcolor1}{rgb}{0.15, 0.15, 0.85}
\begin{minipage}[t]{0.45\textwidth}
        \centering
        \footnotesize
\scalebox{0.6}{
\begin{tikzpicture}[line width=2.5, line cap=round, line join=round]
  \begin{scope}[color=linkcolor0]
    \draw (5.65, 2.85) .. controls (5.65, 2.73) and (5.03, 2.67) .. 
          (4.61, 2.63) .. controls (4.15, 2.59) and (3.56, 2.53) .. 
          (3.25, 2.67) .. controls (2.98, 2.79) and (2.90, 3.11) .. 
          (2.85, 3.41) .. controls (2.77, 3.86) and (2.76, 4.32) .. 
          (2.77, 4.77) .. controls (2.78, 5.10) and (2.79, 5.53) .. 
          (2.96, 5.70) .. controls (3.09, 5.83) and (3.28, 5.82) .. 
          (3.46, 5.76) .. controls (3.66, 5.69) and (3.79, 5.49) .. 
          (3.79, 5.27) .. controls (3.80, 5.09) and (3.83, 4.89) .. 
          (3.99, 4.81) .. controls (4.16, 4.72) and (4.36, 4.72) .. (4.56, 4.73);
    \draw (4.56, 4.73) .. controls (4.93, 4.74) and (5.30, 4.75) .. (5.67, 4.76);
    \draw (5.67, 4.76) .. controls (6.08, 4.78) and (6.49, 4.79) .. (6.90, 4.80);
    \draw (6.90, 4.80) .. controls (7.34, 4.81) and (7.79, 4.83) .. (8.24, 4.84);
    \draw (8.24, 4.84) .. controls (8.44, 4.85) and (8.67, 4.85) .. 
          (8.79, 4.73) .. controls (8.92, 4.61) and (8.93, 4.33) .. (8.94, 4.10);
    \draw (8.96, 3.71) .. controls (9.01, 2.59) and (8.57, 1.44) .. 
          (7.54, 1.18) .. controls (6.42, 0.89) and (5.27, 0.81) .. 
          (4.11, 0.83) .. controls (3.26, 0.84) and (2.18, 0.86) .. 
          (1.64, 1.19) .. controls (1.33, 1.38) and (1.07, 1.65) .. 
          (0.97, 2.00) .. controls (0.83, 2.49) and (0.83, 3.52) .. 
          (0.82, 4.29) .. controls (0.82, 5.03) and (0.81, 6.09) .. 
          (1.06, 6.44) .. controls (1.26, 6.73) and (1.58, 6.90) .. 
          (1.91, 7.03) .. controls (2.52, 7.27) and (3.69, 7.24) .. 
          (4.59, 7.22) .. controls (5.46, 7.20) and (6.67, 7.18) .. 
          (6.77, 6.78) .. controls (6.87, 6.39) and (6.88, 5.60) .. (6.89, 5.00);
    \draw (6.90, 4.60) .. controls (6.90, 4.39) and (6.91, 4.19) .. (6.91, 3.98);
    \draw (6.92, 3.68) .. controls (6.92, 3.60) and (6.92, 3.51) .. (6.92, 3.43);
    \draw (6.92, 3.43) .. controls (6.92, 3.34) and (6.92, 3.25) .. (6.93, 3.15);
    \draw (6.93, 2.83) .. controls (6.94, 2.55) and (6.33, 2.37) .. 
          (5.88, 2.24) .. controls (5.19, 2.03) and (4.48, 1.97) .. 
          (3.77, 2.07) .. controls (3.30, 2.13) and (2.72, 2.22) .. 
          (2.51, 2.53) .. controls (2.30, 2.84) and (2.26, 3.23) .. 
          (2.23, 3.60) .. controls (2.19, 4.05) and (2.16, 4.50) .. 
          (2.17, 4.96) .. controls (2.18, 5.31) and (2.24, 5.66) .. 
          (2.40, 5.98) .. controls (2.54, 6.27) and (2.84, 6.44) .. 
          (3.16, 6.49) .. controls (3.49, 6.54) and (3.83, 6.54) .. 
          (4.14, 6.44) .. controls (4.57, 6.32) and (4.56, 5.55) .. (4.56, 4.93);
    \draw (4.55, 4.53) .. controls (4.55, 4.26) and (4.55, 3.93) .. 
          (4.74, 3.79) .. controls (4.93, 3.66) and (5.34, 3.68) .. (5.66, 3.70);
    \draw (5.66, 3.70) .. controls (6.08, 3.73) and (6.50, 3.75) .. (6.91, 3.78);
    \draw (6.91, 3.78) .. controls (7.35, 3.81) and (7.78, 3.83) .. (8.21, 3.86);
    \draw (8.21, 3.86) .. controls (8.46, 3.88) and (8.70, 3.89) .. (8.95, 3.91);
    \draw (8.95, 3.91) .. controls (9.36, 3.93) and (9.73, 3.65) .. 
          (9.76, 3.25) .. controls (9.81, 2.66) and (9.57, 2.10) .. 
          (9.29, 1.57) .. controls (8.88, 0.82) and (8.04, 0.48) .. 
          (7.18, 0.34) .. controls (5.87, 0.14) and (4.54, 0.18) .. 
          (3.22, 0.29) .. controls (2.11, 0.38) and (0.83, 0.48) .. 
          (0.52, 1.46) .. controls (0.23, 2.36) and (0.19, 3.31) .. 
          (0.17, 4.26) .. controls (0.15, 5.04) and (0.13, 6.07) .. 
          (0.37, 6.58) .. controls (0.50, 6.86) and (0.67, 7.12) .. 
          (0.95, 7.27) .. controls (1.28, 7.45) and (3.18, 7.46) .. 
          (4.33, 7.47) .. controls (5.56, 7.48) and (7.39, 7.49) .. 
          (7.84, 7.11) .. controls (8.29, 6.73) and (8.26, 5.79) .. (8.24, 5.04);
    \draw (8.23, 4.64) .. controls (8.23, 4.45) and (8.22, 4.25) .. (8.22, 4.06);
    \draw (8.21, 3.73) .. controls (8.20, 3.63) and (8.20, 3.53) .. (8.20, 3.43);
    \draw (8.20, 3.43) .. controls (8.20, 3.34) and (8.19, 3.25) .. (8.19, 3.16);
    \draw (8.18, 2.84) .. controls (8.17, 2.55) and (7.94, 2.35) .. 
          (7.72, 2.16) .. controls (7.33, 1.82) and (6.84, 1.64) .. 
          (6.34, 1.51) .. controls (5.52, 1.28) and (4.66, 1.29) .. 
          (3.81, 1.35) .. controls (3.06, 1.39) and (2.21, 1.45) .. 
          (1.82, 2.02) .. controls (1.44, 2.60) and (1.44, 3.48) .. 
          (1.44, 4.26) .. controls (1.44, 4.98) and (1.44, 5.91) .. 
          (1.73, 6.34) .. controls (2.03, 6.77) and (2.60, 6.82) .. 
          (3.12, 6.87) .. controls (3.73, 6.93) and (4.34, 6.97) .. 
          (4.95, 6.93) .. controls (5.70, 6.88) and (5.69, 5.85) .. (5.68, 4.96);
    \draw (5.67, 4.57) .. controls (5.67, 4.34) and (5.67, 4.12) .. (5.67, 3.90);
    \draw (5.66, 3.62) .. controls (5.66, 3.56) and (5.66, 3.50) .. (5.66, 3.43);
    \draw (5.66, 3.43) .. controls (5.66, 3.34) and (5.66, 3.24) .. (5.66, 3.15);
  \end{scope}
  \begin{scope}[color=linkcolor1]
    \draw (5.46, 3.43) .. controls (5.32, 3.43) and (5.18, 3.36) .. 
          (5.18, 3.23) .. controls (5.18, 3.04) and (5.43, 3.03) .. (5.66, 3.03);
    \draw (5.66, 3.03) .. controls (6.08, 3.03) and (6.50, 3.03) .. (6.93, 3.03);
    \draw (6.93, 3.03) .. controls (7.35, 3.03) and (7.77, 3.03) .. (8.19, 3.04);
    \draw (8.19, 3.04) .. controls (8.38, 3.04) and (8.58, 3.07) .. 
          (8.58, 3.24) .. controls (8.58, 3.34) and (8.50, 3.43) .. (8.40, 3.43);
    \draw (8.00, 3.43) .. controls (7.71, 3.43) and (7.41, 3.43) .. (7.12, 3.43);
    \draw (6.72, 3.43) .. controls (6.43, 3.43) and (6.15, 3.43) .. (5.86, 3.43);
  \end{scope}
\end{tikzpicture}

}
 \end{minipage}
    \hfill
\begin{minipage}[t]{0.45\textwidth}
        \centering
        \footnotesize 
        \scalebox{0.6}{ 
\begin{tikzpicture}[line width=1.2, line cap=round, line join=round]
  \begin{scope}[color=linkcolor0]
    \draw (3.84, 2.86) .. controls (3.51, 2.86) and (3.17, 2.71) .. 
          (3.16, 2.41) .. controls (3.15, 2.16) and (3.00, 1.93) .. 
          (2.76, 1.90) .. controls (2.52, 1.87) and (2.28, 1.85) .. 
          (2.03, 1.85) .. controls (1.88, 1.84) and (1.70, 1.83) .. 
          (1.60, 1.91) .. controls (1.50, 1.99) and (1.48, 3.44) .. 
          (1.47, 4.28) .. controls (1.46, 5.15) and (1.45, 6.57) .. 
          (1.56, 6.86) .. controls (1.65, 7.09) and (1.90, 7.17) .. 
          (2.15, 7.21) .. controls (2.50, 7.26) and (2.86, 7.26) .. 
          (3.21, 7.22) .. controls (3.48, 7.18) and (3.77, 7.10) .. 
          (3.75, 6.86) .. controls (3.74, 6.71) and (3.76, 6.55) .. 
          (3.87, 6.46) .. controls (4.00, 6.34) and (4.44, 6.35) .. (4.75, 6.36);
    \draw (4.75, 6.36) .. controls (5.13, 6.37) and (5.50, 6.38) .. (5.88, 6.39);
    \draw (5.88, 6.39) .. controls (6.23, 6.40) and (6.58, 6.41) .. (6.93, 6.42);
    \draw (6.93, 6.42) .. controls (7.33, 6.43) and (7.73, 6.44) .. (8.13, 6.45);
    \draw (8.13, 6.45) .. controls (8.38, 6.45) and (8.64, 6.42) .. 
          (8.86, 6.29) .. controls (9.03, 6.19) and (9.15, 6.01) .. (9.15, 5.82);
    \draw (9.15, 5.63) .. controls (9.15, 4.11) and (9.15, 1.53) .. 
          (8.95, 1.22) .. controls (8.74, 0.90) and (6.43, 0.84) .. 
          (4.99, 0.81) .. controls (3.58, 0.77) and (1.24, 0.71) .. 
          (0.88, 0.92) .. controls (0.52, 1.13) and (0.54, 3.03) .. 
          (0.55, 4.27) .. controls (0.55, 5.44) and (0.57, 7.40) .. 
          (0.78, 7.64) .. controls (0.99, 7.88) and (2.74, 7.85) .. 
          (3.82, 7.83) .. controls (4.86, 7.81) and (6.66, 7.78) .. 
          (6.79, 7.66) .. controls (6.92, 7.54) and (6.92, 6.92) .. (6.93, 6.51);
    \draw (6.93, 6.32) .. controls (6.93, 6.14) and (6.93, 5.96) .. (6.93, 5.78);
    \draw (6.94, 5.59) .. controls (6.94, 5.38) and (6.94, 5.16) .. (6.94, 4.95);
    \draw (6.94, 4.76) .. controls (6.95, 4.46) and (6.95, 4.06) .. 
          (6.80, 3.96) .. controls (6.66, 3.86) and (6.23, 3.85) .. (5.92, 3.85);
    \draw (5.92, 3.85) .. controls (5.51, 3.84) and (5.09, 3.84) .. (4.68, 3.83);
    \draw (4.68, 3.83) .. controls (4.39, 3.83) and (4.07, 3.81) .. 
          (3.95, 3.56) .. controls (3.86, 3.38) and (3.83, 3.17) .. (3.84, 2.96);
    \draw (3.84, 2.77) .. controls (3.86, 2.29) and (3.79, 1.76) .. 
          (3.37, 1.67) .. controls (2.97, 1.60) and (2.56, 1.57) .. 
          (2.16, 1.58) .. controls (1.87, 1.58) and (1.47, 1.58) .. 
          (1.35, 1.69) .. controls (1.24, 1.80) and (1.20, 3.50) .. 
          (1.17, 4.48) .. controls (1.15, 5.47) and (1.11, 7.16) .. 
          (1.24, 7.29) .. controls (1.36, 7.41) and (2.43, 7.40) .. 
          (3.08, 7.40) .. controls (3.91, 7.39) and (4.79, 7.16) .. (4.75, 6.46);
    \draw (4.74, 6.27) .. controls (4.73, 6.11) and (4.79, 5.96) .. 
          (4.90, 5.84) .. controls (5.06, 5.65) and (5.54, 5.66) .. (5.89, 5.66);
    \draw (5.89, 5.66) .. controls (6.24, 5.67) and (6.59, 5.68) .. (6.94, 5.68);
    \draw (6.94, 5.68) .. controls (7.34, 5.69) and (7.74, 5.70) .. (8.15, 5.70);
    \draw (8.15, 5.70) .. controls (8.48, 5.71) and (8.82, 5.72) .. (9.15, 5.72);
    \draw (9.15, 5.72) .. controls (9.82, 5.73) and (9.84, 4.40) .. 
          (9.85, 3.31) .. controls (9.87, 2.31) and (9.89, 0.89) .. 
          (9.45, 0.55) .. controls (9.02, 0.20) and (6.57, 0.16) .. 
          (5.05, 0.13) .. controls (3.53, 0.10) and (1.08, 0.06) .. 
          (0.65, 0.38) .. controls (0.22, 0.70) and (0.17, 2.90) .. 
          (0.14, 4.32) .. controls (0.11, 5.79) and (0.06, 7.94) .. 
          (0.53, 8.06) .. controls (1.00, 8.19) and (3.04, 8.16) .. 
          (4.30, 8.15) .. controls (5.51, 8.13) and (7.61, 8.10) .. 
          (7.85, 7.96) .. controls (8.08, 7.82) and (8.11, 7.07) .. (8.12, 6.54);
    \draw (8.13, 6.35) .. controls (8.13, 6.17) and (8.14, 5.98) .. (8.15, 5.80);
    \draw (8.15, 5.61) .. controls (8.16, 5.39) and (8.17, 5.16) .. 
          (8.02, 5.00) .. controls (7.87, 4.83) and (7.33, 4.85) .. (6.94, 4.85);
    \draw (6.94, 4.85) .. controls (6.60, 4.86) and (6.25, 4.87) .. (5.91, 4.88);
    \draw (5.91, 4.88) .. controls (5.56, 4.89) and (5.11, 4.90) .. 
          (4.89, 4.75) .. controls (4.68, 4.60) and (4.68, 4.23) .. (4.68, 3.93);
    \draw (4.68, 3.74) .. controls (4.68, 3.48) and (4.68, 3.22) .. (4.69, 2.96);
    \draw (4.69, 2.77) .. controls (4.69, 2.14) and (4.69, 1.43) .. 
          (4.13, 1.23) .. controls (3.60, 1.03) and (3.03, 1.02) .. 
          (2.46, 1.03) .. controls (2.01, 1.04) and (1.36, 1.05) .. 
          (1.14, 1.21) .. controls (0.91, 1.36) and (0.88, 3.17) .. 
          (0.85, 4.28) .. controls (0.83, 5.35) and (0.80, 7.19) .. 
          (0.93, 7.42) .. controls (1.07, 7.64) and (2.02, 7.64) .. 
          (2.65, 7.63) .. controls (3.46, 7.63) and (4.26, 7.62) .. 
          (5.05, 7.47) .. controls (5.53, 7.38) and (5.88, 6.97) .. (5.88, 6.48);
    \draw (5.89, 6.30) .. controls (5.89, 6.12) and (5.89, 5.94) .. (5.89, 5.76);
    \draw (5.90, 5.57) .. controls (5.90, 5.37) and (5.90, 5.17) .. (5.91, 4.97);
    \draw (5.91, 4.78) .. controls (5.91, 4.50) and (5.92, 4.22) .. (5.92, 3.94);
    \draw (5.92, 3.75) .. controls (5.93, 3.45) and (5.93, 3.08) .. 
          (5.71, 2.97) .. controls (5.48, 2.86) and (5.03, 2.86) .. (4.69, 2.86);
    \draw (4.69, 2.86) .. controls (4.41, 2.86) and (4.12, 2.86) .. (3.84, 2.86);
  \end{scope}
\end{tikzpicture}
}
\end{minipage}
\caption{Left hand side: The red $(5,2)$-torus knot (with clockwise orientation). By convention in this paper, crossings in this diagram of $T(5,2)$ are considered to be positive, and the braid contained in this diagram of $T(5,2)$ is presented as a product of positive braid group genenrators $\sigma_i$. The blue circle indicates the disk along which the full twist is performed. Right hand side: The twisted torus knot $(5,2,3,-1)$. This is constructed from the $(5,2)$-torus knot by adding $-1$ full twist to the first $3$ strands. } 
    \label{fig:twistedtorus} 
\end{figure}
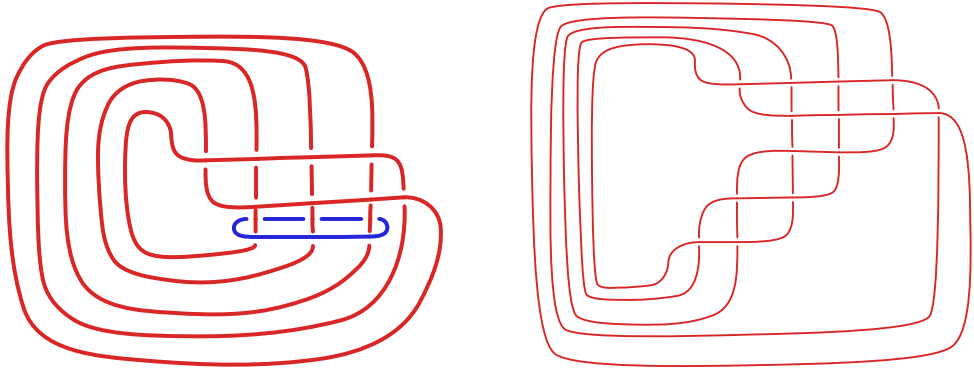

\begin{figure}[h]
    \centering
\definecolor{linkcolor0}{rgb}{0.85, 0.15, 0.15}
\definecolor{linkcolor1}{rgb}{0.15, 0.15, 0.85}
\scalebox{0.6}{ 
\begin{tikzpicture}[line width=2.5, line cap=round, line join=round]
  \begin{scope}[color=linkcolor0]
    \draw (5.63, 3.33) .. controls (5.63, 2.82) and (5.07, 2.54) .. 
          (4.50, 2.50) .. controls (4.02, 2.46) and (3.36, 2.41) .. 
          (3.08, 2.53) .. controls (2.82, 2.65) and (2.74, 2.95) .. 
          (2.69, 3.23) .. controls (2.62, 3.66) and (2.60, 4.09) .. 
          (2.61, 4.53) .. controls (2.62, 4.84) and (2.63, 5.25) .. 
          (2.80, 5.40) .. controls (2.92, 5.52) and (3.10, 5.51) .. 
          (3.27, 5.46) .. controls (3.46, 5.39) and (3.57, 5.20) .. 
          (3.58, 5.00) .. controls (3.59, 4.82) and (3.61, 4.63) .. 
          (3.77, 4.56) .. controls (3.93, 4.48) and (4.12, 4.48) .. (4.31, 4.48);
    \draw (4.31, 4.48) .. controls (4.44, 4.49) and (4.57, 4.49) .. (4.70, 4.49);
    \draw (4.70, 4.49) .. controls (5.01, 4.50) and (5.31, 4.51) .. (5.62, 4.52);
    \draw (5.62, 4.52) .. controls (5.76, 4.52) and (5.91, 4.52) .. (6.05, 4.53);
    \draw (6.05, 4.53) .. controls (6.31, 4.53) and (6.57, 4.54) .. (6.83, 4.55);
    \draw (6.83, 4.55) .. controls (7.16, 4.56) and (7.48, 4.56) .. (7.81, 4.57);
    \draw (7.81, 4.57) .. controls (7.98, 4.58) and (8.16, 4.57) .. 
          (8.30, 4.48) .. controls (8.45, 4.37) and (8.46, 4.11) .. (8.47, 3.90);
    \draw (8.49, 3.50) .. controls (8.50, 3.31) and (8.51, 3.12) .. (8.52, 2.93);
    \draw (8.52, 2.93) .. controls (8.52, 2.77) and (8.53, 2.60) .. (8.54, 2.43);
    \draw (8.55, 2.04) .. controls (8.58, 1.47) and (7.80, 1.28) .. 
          (7.15, 1.12) .. controls (6.09, 0.85) and (4.99, 0.77) .. 
          (3.89, 0.79) .. controls (3.09, 0.81) and (2.07, 0.82) .. 
          (1.56, 1.13) .. controls (1.26, 1.30) and (1.01, 1.56) .. 
          (0.91, 1.89) .. controls (0.77, 2.35) and (0.77, 3.33) .. 
          (0.76, 4.06) .. controls (0.76, 4.77) and (0.75, 5.77) .. 
          (0.99, 6.10) .. controls (1.19, 6.38) and (1.49, 6.54) .. 
          (1.81, 6.67) .. controls (2.38, 6.90) and (3.49, 6.87) .. 
          (4.34, 6.85) .. controls (5.15, 6.84) and (6.30, 6.81) .. 
          (6.56, 6.45) .. controls (6.81, 6.09) and (6.82, 5.34) .. (6.83, 4.74);
    \draw (6.84, 4.38) .. controls (6.84, 4.25) and (6.84, 4.12) .. (6.84, 3.99);
    \draw (6.84, 3.99) .. controls (6.84, 3.90) and (6.84, 3.81) .. (6.84, 3.72);
    \draw (6.85, 3.47) .. controls (6.85, 3.41) and (6.85, 3.35) .. (6.85, 3.29);
    \draw (6.85, 2.96) .. controls (6.86, 2.48) and (6.24, 2.30) .. 
          (5.70, 2.15) .. controls (5.01, 1.95) and (4.28, 1.86) .. 
          (3.56, 1.96) .. controls (3.12, 2.02) and (2.57, 2.10) .. 
          (2.37, 2.40) .. controls (2.18, 2.70) and (2.14, 3.06) .. 
          (2.11, 3.41) .. controls (2.07, 3.84) and (2.05, 4.27) .. 
          (2.06, 4.69) .. controls (2.06, 5.03) and (2.12, 5.37) .. 
          (2.27, 5.68) .. controls (2.40, 5.95) and (2.68, 6.11) .. 
          (2.99, 6.16) .. controls (3.30, 6.20) and (3.61, 6.20) .. 
          (3.91, 6.11) .. controls (4.18, 6.04) and (4.32, 5.75) .. (4.32, 5.46);
    \draw (4.32, 5.46) .. controls (4.31, 5.30) and (4.31, 5.13) .. (4.31, 4.96);
    \draw (4.31, 4.68) .. controls (4.31, 4.64) and (4.31, 4.61) .. (4.31, 4.57);
    \draw (4.31, 4.29) .. controls (4.30, 4.03) and (4.30, 3.72) .. 
          (4.49, 3.60) .. controls (4.68, 3.47) and (5.23, 3.51) .. (5.63, 3.53);
    \draw (5.63, 3.53) .. controls (5.81, 3.54) and (6.00, 3.55) .. (6.18, 3.56);
    \draw (6.18, 3.56) .. controls (6.40, 3.58) and (6.62, 3.59) .. (6.84, 3.60);
    \draw (6.84, 3.60) .. controls (6.99, 3.61) and (7.14, 3.62) .. (7.28, 3.63);
    \draw (7.28, 3.63) .. controls (7.46, 3.64) and (7.64, 3.65) .. (7.81, 3.66);
    \draw (7.81, 3.66) .. controls (8.04, 3.68) and (8.26, 3.69) .. (8.48, 3.70);
    \draw (8.48, 3.70) .. controls (8.97, 3.73) and (9.20, 3.16) .. (9.20, 2.59);
    \draw (9.20, 2.59) .. controls (9.20, 2.42) and (9.20, 2.25) .. (9.20, 2.08);
    \draw (9.20, 1.69) .. controls (9.20, 1.29) and (9.11, 0.86) .. 
          (8.76, 0.73) .. controls (8.31, 0.55) and (7.46, 0.42) .. 
          (6.80, 0.32) .. controls (5.56, 0.13) and (4.29, 0.17) .. 
          (3.04, 0.27) .. controls (1.99, 0.35) and (0.77, 0.45) .. 
          (0.48, 1.38) .. controls (0.21, 2.23) and (0.17, 3.13) .. 
          (0.16, 4.03) .. controls (0.14, 4.77) and (0.13, 5.75) .. 
          (0.35, 6.23) .. controls (0.47, 6.49) and (0.63, 6.75) .. 
          (0.89, 6.89) .. controls (1.21, 7.06) and (3.01, 7.08) .. 
          (4.10, 7.08) .. controls (5.26, 7.09) and (6.99, 7.10) .. 
          (7.40, 6.72) .. controls (7.80, 6.33) and (7.81, 5.47) .. (7.81, 4.77);
    \draw (7.81, 4.37) .. controls (7.81, 4.20) and (7.81, 4.03) .. (7.81, 3.86);
    \draw (7.81, 3.55) .. controls (7.82, 3.46) and (7.82, 3.37) .. (7.82, 3.28);
    \draw (7.82, 3.28) .. controls (7.82, 3.13) and (7.82, 2.98) .. (7.82, 2.82);
    \draw (7.82, 2.43) .. controls (7.82, 2.12) and (7.63, 1.85) .. 
          (7.35, 1.73) .. controls (6.92, 1.55) and (6.46, 1.45) .. 
          (6.00, 1.36) .. controls (5.21, 1.20) and (4.40, 1.22) .. 
          (3.60, 1.27) .. controls (2.89, 1.32) and (2.09, 1.37) .. 
          (1.72, 1.92) .. controls (1.36, 2.47) and (1.36, 3.30) .. 
          (1.36, 4.04) .. controls (1.36, 4.72) and (1.36, 5.60) .. 
          (1.64, 6.01) .. controls (1.92, 6.41) and (2.46, 6.47) .. 
          (2.95, 6.51) .. controls (3.57, 6.57) and (4.19, 6.57) .. 
          (4.80, 6.48) .. controls (5.48, 6.37) and (5.61, 5.54) .. (5.62, 4.78);
    \draw (5.62, 4.78) .. controls (5.62, 4.72) and (5.62, 4.66) .. (5.62, 4.60);
    \draw (5.62, 4.33) .. controls (5.62, 4.25) and (5.62, 4.17) .. (5.62, 4.09);
    \draw (5.62, 3.79) .. controls (5.62, 3.74) and (5.63, 3.69) .. (5.63, 3.64);
  \end{scope}
  \begin{scope}[color=linkcolor1]
    \draw (4.13, 5.53) .. controls (4.01, 5.58) and (3.88, 5.48) .. 
          (3.88, 5.34) .. controls (3.88, 5.09) and (4.10, 4.91) .. (4.31, 4.76);
    \draw (4.31, 4.76) .. controls (4.40, 4.70) and (4.49, 4.64) .. (4.58, 4.58);
    \draw (4.86, 4.38) .. controls (5.08, 4.23) and (5.37, 4.06) .. (5.62, 3.90);
    \draw (5.62, 3.90) .. controls (5.75, 3.82) and (5.89, 3.74) .. (6.02, 3.67);
    \draw (6.35, 3.46) .. controls (6.52, 3.36) and (6.68, 3.26) .. (6.85, 3.16);
    \draw (6.85, 3.16) .. controls (7.08, 3.02) and (7.49, 2.80) .. (7.82, 2.63);
    \draw (7.82, 2.63) .. controls (8.06, 2.50) and (8.30, 2.37) .. (8.55, 2.24);
    \draw (8.55, 2.24) .. controls (8.76, 2.12) and (8.98, 2.00) .. (9.20, 1.88);
    \draw (9.20, 1.88) .. controls (9.49, 1.73) and (9.81, 1.71) .. 
          (9.79, 1.93) .. controls (9.78, 2.18) and (9.61, 2.38) .. (9.38, 2.50);
    \draw (9.03, 2.67) .. controls (8.92, 2.73) and (8.80, 2.79) .. (8.69, 2.84);
    \draw (8.34, 3.02) .. controls (8.22, 3.08) and (8.11, 3.13) .. (7.99, 3.19);
    \draw (7.66, 3.36) .. controls (7.58, 3.40) and (7.51, 3.45) .. (7.44, 3.51);
    \draw (7.15, 3.74) .. controls (7.09, 3.78) and (7.03, 3.83) .. (6.97, 3.88);
    \draw (6.69, 4.11) .. controls (6.54, 4.23) and (6.38, 4.33) .. (6.22, 4.43);
    \draw (5.92, 4.60) .. controls (5.86, 4.64) and (5.81, 4.67) .. (5.75, 4.70);
    \draw (5.45, 4.88) .. controls (5.14, 5.06) and (4.69, 5.32) .. (4.50, 5.39);
  \end{scope}
\end{tikzpicture}

}
\caption{The twisted torus knots $(5,2,6,s)$ are constructed from the $(5,2)$-torus knot by performing full twists along the disk enclosed by the blue circle in this figure. Exactly $6$ red strands pass through this disk. }
\label{fig:twistedtorusrlarge}
\end{figure}
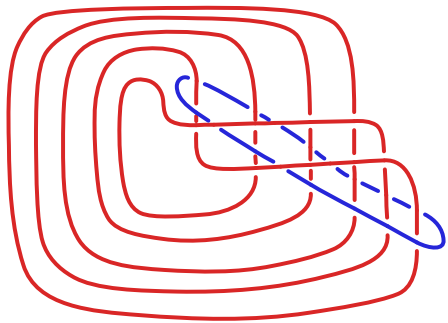

In this paper, we address this gap by classifying unlinks within the family of twisted torus links.
Extending Lee’s work on unknotted twisted torus knots, we obtain a complete characterization of twisted torus links that are unlinks.
Our main result is the following.

\begin{thm}\label{unlink}
Let \(p,q,r,s\) be integers such that \(\gcd(p,q)>1\), \(1\le q<p\), \(2\le r\le p+q\), and \(s\neq0\).
Then the twisted torus link \(T(p,q,r,s)\) is an unlink if and only if one of the following holds:
\begin{enumerate}
\item \((p,q,r,s)=(2n+2,\,2n,\,2n+1,\,-1)\) for some integer \(n\ge1\);
\item \((p,q,r,s)=(4n,\,n,\,2n,\,-1)\) for some integer \(n\ge2\);
\item \((p,q,r,s)=(mn,\,m,\,m,\,-n)\) for some integers \(m\ge2\) and \(n\ge2\).
\end{enumerate}
\end{thm}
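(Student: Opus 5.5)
We first check that each listed family is indeed an unlink, and then show that no other parameters give one.

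\emph{Sufficiency.} We work with the braid form. Write \(T(p,q)\) as the closure of \((\sigma_1\cdots\sigma_{p-1})^q\) on \(p\) strands, and insert \(\Delta_r^{2s}\) on the first \(r\) strands, where \(\Delta_r^2\) is the full twist.

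For family (3), take \(p=mn\), \(q=r=m\). The braid \((\sigma_1\cdots\sigma_{mn-1})^m\) can be conjugated and destabilized to \((\sigma_1\cdots\sigma_{m-1})^{mn}\) on \(m\) strands, since \(T(mn,m)=T(m,mn)\). Under this identification the \(m\) twisted strands become all \(m\) strands. The latter braid equals \(\Delta_m^{2n}\), so adding \(-n\) full twists gives the trivial braid on \(m\) strands. Its closure is the \(m\)-component unlink. To make this identification precise, we will draw the twisting disk as in Figures~\ref{fig:twistedtorus} and \ref{fig:twistedtorusrlarge} and observe that it is isotopic to a meridian disk of the core of the other solid torus.

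For families (1) and (2), we will simplify by explicit braid moves, using the same kind of manipulation Lee uses for unknotted twisted torus knots. Family (1) should reduce by Markov moves to the closure of a trivial braid. For family (2) we use \(T(4n,n)\): the full twist on \(2n\) strands cancels against half of the torus braid, leaving a braid that destabilizes completely. In both cases the component count matches \(\gcd(p,q)\) (or the appropriate count when \(r\) changes the permutation), so the number of components is consistent.

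\emph{Necessity.} Suppose \(T(p,q,r,s)\) is an unlink with \(c\ge 2\) components.

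1. \textbf{Reduction to Lee's theorem via components.} Every component must be an unknot. Each component of a twisted torus link is itself a twisted torus knot or a torus knot, with parameters read off from the permutation of the braid. Applying Theorem~\ref{lee} to each component gives strong restrictions on \((p,q,r,s)\).

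2. \textbf{Pairwise linking numbers.} All pairwise linking numbers must vanish. In the braid, the linking number between two components is half the signed count of crossings between their strands. The torus part contributes positive linking \(q\cdot(\text{strands of }i)(\text{strands of }j)/p\), roughly. The twist part contributes \(s\cdot a_i a_j\), where \(a_i\) is the number of the \(r\) twisted strands belonging to component \(i\). Setting each of these to zero gives Diophantine equations. These force \(s<0\), and in most cases \(s=-1\), or \(a_i\) proportional to the strand counts.

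3. \textbf{Case analysis.} We combine the two constraints.
\begin{itemize}
\item Case \(r\le q\): the permutation keeps the component structure of \(T(p,q)\), and the linking equations give \(r=q=m\), \(p=mn\), \(s=-n\). This is family (3).
\item Case \(q<r\le p\): the linking equations with \(s=-1\) give \(r=2q\), \(p=4q\), or \(r=q+1\), \(p=q+2\). These are families (2) and (1).
\item Case \(r>p\): we use the symmetry \(T(p,q,r,s)=T(q,p,r,s)\) suggested by Figure~\ref{fig:twistedtorusrlarge} to reduce to the previous cases.
\end{itemize}

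To exclude the remaining parameters, we also need to rule out nontrivial links whose components are unknots with zero linking. For this we will use the multivariable Alexander polynomial or the Conway polynomial, which must vanish for split links. Alternatively, since \(s<0\) and the braid is close to positive, we can use the Bennequin-type bound on the Euler characteristic, \(\chi=c\) for an unlink, through a Seifert surface or slice genus computation.

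\textbf{Main obstacle.} The hardest step is this last exclusion: showing that parameter sets which pass the component test and the linking test, but are not in the list, still give nontrivial links. I expect to handle these by a genus comparison. After cancelling the negative twist against the positive torus braid, the result is a positive braid whenever \(|s|\) is small relative to \(q/r\), and positive braid closures have \(\chi = \#\text{strands}-\#\text{crossings}\), which is smaller than \(c\) unless the braid is trivial. The borderline cases that remain would be checked directly.
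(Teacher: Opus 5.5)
Your reduction of the case \(r>p\) does not work. Passing to \(T(q,p,r,s)\) leaves \(r>p>q\), so the number of twisted strands still exceeds the braid index of the new presentation, and you have not landed in any earlier case. The missing idea is a pigeonhole count. The \(r\) twisted strands are distributed among the \(d=\gcd(p,q)\) components. (Full twists are pure braids, so \(r\) never changes the permutation or the number of components, contrary to your parenthetical remark.) Each component is itself the twisted torus knot \(T(p/d,q/d,r_i',s)\), where \(r_i'\) is the number of twisted strands it contributes. Since \(\sum_i r_i'=r>p\), some \(r_i'>p/d\). No such knot is an unknot by Theorem~\ref{lee}, because in every unknotted family there the third parameter is at most the first.

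The necessity argument for \(r\le p\) also has a gap: the conclusions of your case analysis are asserted, and they do not follow from the linking equations. For example, \((18,2,6,-1)\) has zero linking number, yet both components are \(T(9,1,3,-1)\), which destabilizes to the closure of \((\sigma_2^{-1}\sigma_1^{-1})^2\), a trefoil. What does the work in the paper is an explicit identification of the components when \(d=2\). For \(r\) even, both are \(T(p/2,q/2,r/2,s)\) and \(pq=-r^2s\). For \(r=2r'+1\), they are \(T(p/2,q/2,r',s)\) and \(T(p/2,q/2,r'+1,s)\), with \(pq=-4r'(r'+1)s\). Matching these against Lee's seven families one by one yields exactly \((2n+2,2n,2n+1,-1)\), \((8,2,4,-1)\) and \((2n,2,2,-n)\). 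For \(d>2\), the paper first shows that every component meets the twisting block; otherwise that component has linking number \(pq/d^2\neq0\) with another. Then every two-component sublink must be one of these three \(d=2\) unlinks. After excluding \((2n+2,2n,2n+1,-1)\) when \(d\ge3\), this forces \(r=d\) or \(r=2d\). Once this is done, nothing passes your component and linking tests except the listed families. So the ``main obstacle'' you describe never arises, and the Alexander polynomial or Bennequin machinery is unnecessary; indeed, a positive braid whose closure has zero pairwise linking has no crossings between distinct components. Finally, sufficiency for families (1) and (2) is only announced, not proved. The paper proves (1) by an explicit braid computation reducing the word to \((\sigma_{2n}^{-1}\cdots\sigma_1^{-1})\sigma_{2n+1}\sigma_{2n}\cdots\sigma_2\), whose closure is visibly two split unknots. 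It proves (2) by exhibiting \(T(4n,n,2n,-1)\) as \(n\) parallel curves on an annulus whose core is the unknot \(T(4,1,2,-1)\) with framing \(0\). Your treatment of family (3) is fine and agrees with the paper's use of \(T(p,q,q,s)=T(q,p+qs)\).
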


\noindent
We remark that the families appearing in Theorem~\ref{unlink} are pairwise disjoint.

Theorem~\ref{unlink} shows that, despite the apparent flexibility in the definition of twisted
torus links, the unlink occurs only in a very restricted set of parameter families. This result may be viewed as a natural extension of Lee’s classification of unknotted
twisted torus knots to the link setting.

From the perspective of generalized T-links, Theorem~\ref{unlink} provides a complete
classification of unlinks within the subfamily of generalized T-links determined by two
pairs of parameters.

The paper is organized as follows.
Section~2 collects preliminary material and establishes notation.
In Section~3 we classify twisted torus links with exactly two components that are unlinks.
Section~4 treats the case of links with more than two components. Section~5 completes the proof of Theorem~\ref{unlink}.

\textbf{Acknowledgements. }The authors thank Prof. Yi Liu and Prof. William Menasco for valuable discussions and suggestions.

\section{Preliminaries}

In this section we collect basic facts and conventions concerning twisted torus links that will be used throughout the paper.
We begin by recalling elementary properties of twisted torus links, including their number of components and convenient braid presentations.
We also record some standard equivalences and constructions that allow us to reduce to simpler parameter ranges.
Finally, we recall a classification result of Lee characterizing when a twisted torus knot is an unknot, which will play a central role in our later arguments.

The twisted torus link \(T(p,q,r,s)\) has \(\gcd(p,q)\) components.
Indeed, by definition, \(T(p,q,r,s)\) is obtained from the torus link \(T(p,q)\) by performing \(s\) full twists on \(r\) of its strands.
Since full twists do not change the number of link components, the resulting link has the same number of components as \(T(p,q)\), namely \(\gcd(p,q)\). Note that this is also true for $p<r\le p+q$.

If \(r \le p\), the twisted torus link \(T(p,q,r,s)\) admits a convenient braid description.
For \(s>0\), it is represented by the braid
\[
(\sigma_1\sigma_2\cdots\sigma_{p-1})^{q}
(\sigma_1\sigma_2\cdots\sigma_{r-1})^{sr},
\]
while for \(s<0\), it is represented by
\[
(\sigma_1\sigma_2\cdots\sigma_{p-1})^{q}
(\sigma_{r-1}^{-1}\sigma_{r-2}^{-1}\cdots\sigma_1^{-1})^{-sr}.
\]

Remark: in our paper we are using a non-conventional orientation for crossings. See the captions in Figure~\ref{fig:twistedtorus}.

\begin{lemma}\label{lemma1}
The twisted torus links \(T(p,q,r,s)\) and \(T(q,p,r,s)\) are equivalent.
\end{lemma}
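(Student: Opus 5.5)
The plan is to use the standard symmetry of the torus link $T(p,q)$ sitting on a standardly embedded torus $\mathcal{T}\subset S^3$. Write $S^3=V_1\cup V_2$ as the union of two solid tori glued along $\mathcal{T}$. We realize $T(p,q)$ as a curve on $\mathcal{T}$ that winds $p$ times along the core of $V_1$ and $q$ times along the core of $V_2$. There is an orientation-preserving homeomorphism $h\colon S^3\to S^3$ exchanging $V_1$ and $V_2$, for example the rotation by $\pi$ about an axis meeting $\mathcal{T}$ in four points. This map carries the longitude of $\mathcal{T}$ to the meridian and vice versa. Hence $h(T(p,q))=T(q,p)$ as links, up to the orientation conventions fixed in Figure~\ref{fig:twistedtorus}.

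The key step is to track the twisting disk. In the definition, the $s$ full twists are performed along a disk $D$ meeting $T(p,q)$ in $r$ points. These $r$ points are adjacent strands passing through a small region of $\mathcal{T}$, namely $D\cap \mathcal{T}$ (an arc) union its neighborhood. Twisting along $D$ is the same as doing $-1/s$ surgery on the unknot $c=\partial D$. So $T(p,q,r,s)$ is the image of $T(p,q)$ in the surgered manifold $S^3_{c}(-1/s)\cong S^3$.

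Next I would check that $h(c)$ bounds a disk $h(D)$ that meets $T(q,p)$ in $r$ adjacent strands in the same sense. Equivalently, I will argue that the configuration $(T(p,q),c)$ is isotopic to $(T(q,p),c')$, where $c'$ is the standard twisting circle for $T(q,p)$. Concretely, $r$ adjacent strands of $T(p,q)$ form a family of $r$ consecutive parallel arcs on $\mathcal{T}$, and $c$ encircles them in a neighborhood of a transverse arc on $\mathcal{T}$. Since $h$ preserves $\mathcal{T}$, it sends this to $r$ consecutive parallel arcs of $T(q,p)$ encircled by a circle in a neighborhood of a transverse arc on $\mathcal{T}$. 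Sliding that arc along $\mathcal{T}$ transverse to the link gives an isotopy to the standard position. The surgery coefficient $-1/s$ is preserved because $h$ is orientation preserving, so the sign of the twist is unchanged. Applying $h$ and then doing surgery commute, so the two twisted links are equivalent.

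The main obstacle is the case $r>p$ (or $r>q$ on the other side), where the twisting circle is the one drawn in Figure~\ref{fig:twistedtorusrlarge} rather than a braid-closure disk. Here I must confirm that "$r$ adjacent strands" is intrinsic to $\mathcal{T}$. That is, for any $r\le p+q$ there is an arc on $\mathcal{T}$ transverse to the link meeting it in $r$ consecutive points, and any two such arcs with the same count are isotopic through such arcs. I would prove this by lifting to the universal cover $\mathbb{R}^2$, where the link is a family of parallel lines of slope $q/p$. A segment crossing $r$ consecutive lines can be slid and rotated (keeping transversality) to the standard one. This is uniform in $p$ and $q$, so it is symmetric under the swap induced by $h$.
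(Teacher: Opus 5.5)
The paper gives no argument of its own for this lemma; it only cites \cite{Pai1} and \cite{Knottypes}. Your strategy is the natural torus-symmetry argument. However, the step that carries all the content is false as you state it: it is not true that any two embedded arcs on $\mathcal{T}$, transverse to the link and meeting it in $r$ points, are isotopic through such arcs. During such an isotopy the $r$ intersection points slide along the link without colliding, so their cyclic order along each component is an invariant, and that order depends on the slope of the arc.

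For example, lift the trefoil to the lines $2y-3x\in\mathbb{Z}$ in $\mathbb{R}^2$ and start at a point with $2y-3x=-\tfrac14$. The straight segments with displacements $(-\tfrac12,\tfrac12)$ and $(1,\tfrac{11}{4})$ both project to embedded arcs in $\mathcal{T}$ that cross the lines $2y-3x=0,1,2$. Measuring position along the knot by $y-x \bmod 1$, their crossing points occur in the cyclic orders $(0,1,2)$ and $(0,2,1)$, so the two arcs are not isotopic through such arcs. In the universal cover you can always rotate one segment to another. But this descends to an isotopy of embedded arcs only if the segment never contains a nonzero lattice vector. Once $r>\min(p,q)$ some directions are obstructed; for instance, in the picture of $T(q,p)$ the vertical direction is obstructed as soon as $r>q$. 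Your remark that the argument is ``uniform in $p$ and $q$'' hides exactly this, which is also why the hypothesis $r\le p+q$ never enters your argument, although it must.

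What is true, and suffices, is a restricted version. Assume $q\le p$. In the picture of $T(q,p)$, lift the link to the lines $f:=qy-px\in\mathbb{Z}$, so vertical circles meet it $q$ times and horizontal ones $p$ times, and orient arcs so that $f$ increases. The standard arc for $T(q,p,r,s)$ is vertical if $r\le q$. For $r>q$ (the regime of Figure~\ref{fig:twistedtorusrlarge}) it is tilted in the sense that increases the number of strands met. Likewise $h(\alpha)$ is horizontal if $r\le p$, and tilted in the count-increasing sense if $r>p$. So both arcs have displacement in the closed quadrant $Q=\{(-a,b):a,b\ge 0\}$.

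Now rotate a straight segment through $Q$, keeping a fixed suitable starting point and a fixed $f$-change $c\in(r-1,r)$, so that it always crosses the same $r$ lines. It fails to be embedded only if it contains a lattice vector $(-m,n)\neq 0$ with $m,n\ge 0$ and $qn+pm\le r-1\le p+q-1$. Since $q\le p$, this forces either $(m,n)=(0,n)$, which is possible only when $r>q$, or $(m,n)=(1,0)$, which is possible only when $r>p$. Hence the open quadrant is unobstructed, and each axis direction is used only when it is unobstructed. This gives the isotopy of arcs, and hence of twisting circles in the link complement.

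Note in particular that for $q<r\le p$ the braid-type arc $h(\alpha)$ must be matched with a Figure~\ref{fig:twistedtorusrlarge}-type arc, because a vertical arc meeting $r>q$ strands is not embedded.

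A smaller slip: the $\pi$-rotation about an axis meeting $\mathcal{T}$ in four points preserves each $V_i$. It acts as $-1$ on $H_1(\mathcal{T})$ and does not swap meridian and longitude. An orientation-preserving map exchanging $V_1$ and $V_2$ is, for example, $(z_1,z_2)\mapsto(z_2,z_1)$ on $S^3\subset\mathbb{C}^2$, which is a $\pi$-rotation about a circle lying on $\mathcal{T}$.
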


\begin{proof}
This follows from \cite[Proposition~2.2]{Pai1} or \cite[Lemma~5]{Knottypes}.
\end{proof}

By Lemma~\ref{lemma1}, we may assume throughout that \(q\le p\) when working with a twisted torus link \(T(p,q,r,s)\).

\medskip
As a preliminary observation, we describe a family of torus links that arise naturally as twisted torus links.
This fact was established in \cite{cable}; for completeness, we include a short proof.

\begin{lemma}\label{lemma:torus link in twisted torus link}
The twisted torus link \(T(p,q,q,s)\) is the torus link \(T(q,p+qs)\).
\end{lemma}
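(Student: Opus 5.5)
The plan is to prove this directly from the braid description, using the fact that the full twist on \(q\) strands is central in the braid group \(B_q\). First I will use Lemma~\ref{lemma1} to swap the roles of \(p\) and \(q\). Doing so means \(T(p,q,q,s)\) is the closure of the torus-link braid on \(q\) strands with the twisting applied to all \(q\) strands. That is, \(T(p,q,q,s)\cong T(q,p,q,s)\), and since \(r=q\le q\) the braid description from the preliminaries applies with \(q\) strands. This gives the braid
\[
(\sigma_1\sigma_2\cdots\sigma_{q-1})^{p}\,(\sigma_1\sigma_2\cdots\sigma_{q-1})^{sq}
\]
for \(s>0\), and for \(s<0\) the analogous word with the inverse factor \((\sigma_{q-1}^{-1}\cdots\sigma_1^{-1})^{-sq}\).

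For \(s>0\), the word simply combines to \((\sigma_1\cdots\sigma_{q-1})^{p+sq}\), whose closure is by definition the torus link \(T(q,p+qs)\). For \(s<0\), the key observation is that \(\sigma_{q-1}^{-1}\cdots\sigma_1^{-1}=(\sigma_1\cdots\sigma_{q-1})^{-1}\). Hence the word equals \((\sigma_1\cdots\sigma_{q-1})^{p}(\sigma_1\cdots\sigma_{q-1})^{sq}=(\sigma_1\cdots\sigma_{q-1})^{p+sq}\) in \(B_q\), and the closure is again \(T(q,p+qs)\). Here I use the standard convention that \(T(q,m)\) with \(m<0\) is the mirror-type torus link given by the closure of \((\sigma_1\cdots\sigma_{q-1})^{m}\); if \(p+qs=0\), this closure is the \(q\)-component unlink \(T(q,0)\).

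The main obstacle is justifying the symmetry step cleanly. Lemma~\ref{lemma1} is stated for general \(r\), but the definition of twisting \(r\) strands in \(T(p,q)\) when \(r>p\) (the case \(q>p\) here) uses the Figure~\ref{fig:twistedtorusrlarge} interpretation. So I must check that after the swap the twisted disk really meets the \(q\) strands of the braid axis direction. If I wanted to avoid relying on Lemma~\ref{lemma1}, I would argue geometrically instead. View \(T(p,q)\) on a standard torus, and take the twisting disk bounded by a curve that encircles all \(q\) strands, i.e.\ a meridian disk of the complementary solid torus. Twisting along this disk is then a twist of the solid torus, which maps the \((p,q)\) curve to the \((p+qs,q)\) curve. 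This gives \(T(q,p+qs)\) directly, with the sign matching the paper's crossing conventions from Figure~\ref{fig:twistedtorus}.
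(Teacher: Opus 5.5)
Your proof is correct and is essentially the paper's argument: use Lemma~\ref{lemma1} to pass to \(T(q,p,q,s)\), whose braid \((\sigma_1\cdots\sigma_{q-1})^{p}(\sigma_1\cdots\sigma_{q-1})^{qs}=(\sigma_1\cdots\sigma_{q-1})^{p+qs}\) is the standard braid of \(T(q,p+qs)\). Your explicit handling of \(s<0\) via \(\sigma_{q-1}^{-1}\cdots\sigma_1^{-1}=(\sigma_1\cdots\sigma_{q-1})^{-1}\) fills in a detail the paper leaves implicit.
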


\begin{proof}
By Lemma~\ref{lemma1}, the twisted torus link \(T(p,q,q,s)\) is equivalent to \(T(q,p,q,s)\).
A braid representative for \(T(q,p,q,s)\) is
\[
(\sigma_1\sigma_2\cdots\sigma_{q-1})^{p}
(\sigma_1\sigma_2\cdots\sigma_{q-1})^{qs}
=(\sigma_1\sigma_2\cdots\sigma_{q-1})^{p+qs},
\]
which is precisely the standard braid presentation of the torus link \(T(q,p+qs)\).
\end{proof}

\medskip
In \cite{Lee}, Sangyop Lee gave a complete classification of twisted torus knots that are unknots.

\begin{thm}[Theorem~1.1 in \cite{Lee}]\label{lee}
Let \(p,q,r,s\) be integers such that \(p\) and \(q\) are coprime, \(1\le q<p\), \(2\le r\le p+q\), and \(s\neq0\).
The twisted torus knot \(T(p,q,r,s)\) is an unknot if and only if it satisfies one of the following:
\begin{enumerate}
\item \((p,q,r,s)=(n+1,n,n+1,-1)\) for some integer \(n\ge1\);
\item \((p,q,r,s)=(n+1,n,n,-1)\) for some integer \(n\ge2\);
\item \((p,q,r,s)=(mn\pm1,n,n,-m)\) for some integers \(m\ge2\) and \(n\ge2\);
\item \((p,q,r,s)=(n,1,2,-1)\) for some integer \(n\ge3\);
\item \((p,q,r,s)=(4n\pm1,n,2n,-1)\) for some integer \(n\ge2\);
\item \((p,q,r,s)=(n+1,n-1,n,-1)\) for some even integer \(n\ge4\);
\item \((p,q,r,s)=(f_{n+1},f_{n-1},f_n,-1)\) for some integer \(n\ge4\),
where \(f_n\) denotes the \(n\)th Fibonacci number, defined by \(f_1=f_2=1\).
\end{enumerate}
\end{thm}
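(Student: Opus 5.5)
Since Theorem~\ref{lee} is quoted from \cite{Lee}, what follows is a plan for how I would reprove it, not the argument of this paper. For the ``if'' direction I will handle each family by explicit braid manipulation.

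\emph{Families (2) and (3).} Lemma~\ref{lemma:torus link in twisted torus link} turns \(T(p,q,q,s)\) into the torus knot \(T(q,p+qs)\). For \((n+1,n,n,-1)\) this gives \(T(n,1)\), and for \((mn\pm1,n,n,-m)\) it gives \(T(n,\pm1)\). Both are unknots.

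\emph{Family (1).} Here \(r=p=n+1\), so the braid is
\((\sigma_1\cdots\sigma_n)^{n}(\sigma_1\cdots\sigma_n)^{-(n+1)}=(\sigma_1\cdots\sigma_n)^{-1}\). This follows because \((\sigma_1\cdots\sigma_n)^{n+1}\) is the full twist, hence central, and the negative twist word equals its inverse. The closure of \((\sigma_1\cdots\sigma_n)^{-1}\) destabilizes to the trivial braid.

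\emph{Family (4).} The braid is \(\sigma_1\cdots\sigma_{n-1}\sigma_1^{-2}\). Conjugating and cancelling leaves \(\sigma_1^{-1}\sigma_2\cdots\sigma_{n-1}\), which destabilizes \(n-1\) times to the unknot.

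\emph{Families (5), (6), (7).} I would give explicit isotopies. The key tool is the standard move that slides the twisting disk across the torus knot. This move identifies \(T(p,q,r,-1)\) with a twisted torus knot of smaller parameters, and it is in effect a Euclidean-algorithm step on \((p,q,r)\). Applied to \((f_{n+1},f_{n-1},f_n,-1)\), it should produce \((f_n,f_{n-2},f_{n-1},-1)\). Induction on \(n\) then reduces the Fibonacci family to a base case lying in family (6) or (1). I would treat families (5) and (6) by one or two applications of the same move, which lands them in family (2) or (3).

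For the ``only if'' direction, the first step disposes of \(s>0\). When \(r\le p\) the braid \((\sigma_1\cdots\sigma_{p-1})^q(\sigma_1\cdots\sigma_{r-1})^{sr}\) is a positive braid with more crossings than strands minus one. Its closure therefore has positive genus, given by the positive-braid genus formula \(2g=c-n+1\), and so it is not the unknot. The same conclusion should hold for \(p<r\le p+q\), after rewriting the link via Lemma~\ref{lemma1} or a symmetric braid word. So from now on \(s<0\).

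Next I would use the surgery description. \(T(p,q,r,s)\) is the image of \(T(p,q)\) under \(-1/s\) surgery on the unknotted twisting circle \(c\). If the result is the unknot \(U\), then \(c\) lies in the solid torus \(V=S^3\setminus N(U)\), and a nontrivial surgery on \(c\) inside \(V\) yields the torus-knot exterior \(S^3\setminus N(T(p,q))\). Gabai's results on surgery in solid tori then apply: \(c\) is either a \(0\)- or \(1\)-bridge braid in \(V\) or a cable of one. Its winding number equals the linking number of \(c\) with \(T(p,q)\), which is \(r\) when \(r\le p\). This should force \(|s|=1\) unless \(r=q\). In the case \(r=q\), Lemma~\ref{lemma:torus link in twisted torus link} gives the torus knot \(T(q,p+qs)\), which is trivial exactly when \(p+qs=\pm1\); this yields family (3), and family (2) as its \(|s|=1\) case.

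The main obstacle is the remaining case \(s=-1\) with \(r\neq q\). For this case I would first compute the Alexander polynomial or signature from the braid to get coarse restrictions relating \(r\) to \(p\) and \(q\), such as \(r\in\{2,\,q\pm\text{small},\,p,\,p+1\}\). Then I would apply the disk-sliding reduction from the ``if'' part in reverse, showing that any unknotted example reduces to a smaller unknotted one. Running this reduction backward from the base cases should recover exactly families (1) and (4)--(7), with the Fibonacci family appearing as the only infinite descending chain. Making this reduction exhaustive, so that no sporadic parameters escape, is where I expect the hardest work. I would fall back on the bridge-braid classification of Berge--Gabai to enumerate the possible positions of \(c\).
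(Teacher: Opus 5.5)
The paper gives no proof of this theorem. It quotes it from Lee's paper and uses it as a black box, so your proposal has to stand on its own as a reproof. As such, its ``only if'' direction has a genuine gap at the key step. After twisting, the circle $c$ sits in the solid torus $V=S^3\setminus N(U)$, and a nontrivial surgery on it returns $S^3\setminus N(T(p,q))$. For $q\ge 2$ this is the exterior of a nontrivial knot, so it is irreducible with incompressible boundary. That is exactly the case where Gabai's theorem on surgery in solid tori says nothing. It only constrains the knot when the surgered manifold has compressible boundary: a solid torus forces a $0$- or $1$-bridge braid, and a reducible manifold with a solid-torus summand gives the cable case. Your argument works only when $q=1$, where $T(p,1)$ is itself trivial and the surgery really does produce a solid torus. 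So nothing you have said makes $c$ a bridge braid or a cable, and the conclusion ``$|s|=1$ unless $r=q$'' is unsupported.

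Even granting that conclusion, the case $s=-1$, $r\neq q$ contains families (1) and (4)--(7), which is essentially the whole content of the theorem. There you offer only hoped-for Alexander-polynomial restrictions and a reverse reduction whose exhaustiveness you concede you cannot establish. That restates the problem rather than solving it; you still need an actual mechanism that bounds $r$ and pins down $(p,q,r)$ when $s=-1$.

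The ``if'' direction has a similar hole. Your braid computations for (1)--(4) are correct. For (5)--(7), however, the ``disk-sliding move'' is never defined, so nothing is proved. Also, your Fibonacci descent would terminate at $(f_4,f_2,f_3,-1)=(3,1,2,-1)$, which is family (4), not (6) or (1).

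Finally, the exclusion of $s>0$ when $p<r\le p+q$ is only asserted. This is repairable: the standard diagram is then positive, and Seifert's algorithm on a positive diagram realizes the genus.
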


\section{Case with two components}\label{section:Case with two components}

In this section we classify twisted torus links with exactly two components that are unlinks.
Since \(T(p,q,r,s)\) has \(\gcd(p,q)\) components, the two--component case is precisely the case \(\gcd(p,q)=2\).
Our approach is as follows: assuming \(r\le p\), we compute the linking number of \(T(p,q,r,s)\) in terms of the parameters and impose the condition that it vanishes.
We then apply Lee's classification of unknotted twisted torus knots (Theorem~\ref{lee}) to obtain a finite list of parameter families, show that these families indeed yield unlinks, and finally exclude the remaining range \(r>p\).

Throughout this section we assume \(\gcd(p,q)=2\).
Recall that for an oriented link presented as the closure of a braid,
the linking number between two components is equal to one half of the signed number of crossings
between strands belonging to different components.

\begin{prop}\label{prop:linkingnumber}
Let \(T(p,q,r,s)\) be a twisted torus link with \(r\le p\) and exactly two components (equivalently, \(\gcd(p,q)=2\)).
If the linking number between the two components is zero, then one of the following holds:
\begin{enumerate}
\item \(r\) is even and \(pq=-r^{2}s\). In this case, each component is a twisted torus knot of type \((p/2,q/2,r/2,s)\).
\item \(r\) is odd. Writing \(r=2r'+1\), we have \(pq=-4r'(r'+1)s\). In this case, one component is a twisted torus knot of type \((p/2,q/2,r',s)\) and the other is of type \((p/2,q/2,r'+1,s)\).
\end{enumerate}
\end{prop}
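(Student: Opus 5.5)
We work directly with the braid presentation $\beta=(\sigma_1\cdots\sigma_{p-1})^{q}\,\Delta_r^{2s}$, where $\Delta_r^{2}=(\sigma_1\cdots\sigma_{r-1})^{r}$ is the positive full twist on the first $r$ strands. For $s<0$ the factor is $\Delta_r^{-2|s|}$, which is what the paper's second braid word $(\sigma_{r-1}^{-1}\cdots\sigma_1^{-1})^{-sr}$ expresses. The plan is to determine which strands belong to which component, count the signed inter-component crossings in each factor, and halve.

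\emph{Component assignment.} The torus braid $(\sigma_1\cdots\sigma_{p-1})^{q}$ induces the permutation $i\mapsto i+q \pmod p$ on strand positions. A full twist induces the trivial permutation. Hence the permutation of $\beta$ is translation by $q$ mod $p$. Since $\gcd(p,q)=2$, its two orbits are the odd positions and the even positions, and each orbit has $p/2$ strands. Among the first $r$ strands, if $r=2k$ there are $k$ strands of each component. If $r=2r'+1$ there are $r'+1$ odd-position strands and $r'$ even-position strands.

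\emph{Counting crossings.} In $(\sigma_1\cdots\sigma_{p-1})^{q}$ every pair of strands crosses a fixed number of times. For the torus link $T(p,q)$ the linking number between the two components is $pq/4$; one can check this from the standard formula $\mathrm{lk}=\frac{p}{d}\cdot\frac{q}{d}$ for two components of $T(p,q)$ with $d=2$. It also follows from counting: there are $q(p-1)$ crossings, all positive, and the cross-component ones number $pq/2$. In a full twist $\Delta_r^2$ every pair among the $r$ strands crosses exactly twice, with the sign of the twist. So the inter-component contribution of $\Delta_r^{2s}$ is $2s\cdot(\#\text{odd})(\#\text{even})$, and this contributes $s\cdot(\#\text{odd})(\#\text{even})$ to the linking number. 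This gives
\[
\mathrm{lk}=\frac{pq}{4}+s\,k^2 \quad (r=2k),\qquad \mathrm{lk}=\frac{pq}{4}+s\,r'(r'+1)\quad (r=2r'+1).
\]
Setting $\mathrm{lk}=0$ yields $pq=-4k^2s=-r^2s$ in the even case and $pq=-4r'(r'+1)s$ in the odd case, as claimed. The sign convention must be checked against the paper's stated orientation convention, so that torus crossings are positive.

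\emph{Identifying each component.} Delete the even-position strands from $\beta$. The odd strands of $(\sigma_1\cdots\sigma_{p-1})^{q}$ form $(\sigma_1\cdots\sigma_{p/2-1})^{q/2}$ on $p/2$ strands; this is the $(p/2,q/2)$ torus braid, since the sublink is $T(p/2,q/2)$ sitting as a cable-parallel. The restriction of a full twist to any sub-collection of its strands is again a full twist on that sub-collection, and the odd strands among the first $r$ are adjacent once the others are removed. So the component is $T(p/2,q/2,\#\text{odd in first } r,s)$, and the even component is analogous. This gives types $(p/2,q/2,r/2,s)$ in the even case, and $(p/2,q/2,r'+1,s)$ and $(p/2,q/2,r',s)$ in the odd case.

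The main obstacle is this last step: rigorously justifying that deleting one component from the torus braid leaves exactly the standard $(p/2,q/2)$ torus braid, with the twisted strands still adjacent and at the start. I would argue this geometrically: the $(p,q)$ torus link with two components consists of two parallel $(p/2,q/2)$ torus knots on the torus. The twisting disk meets the odd ones in $\lceil r/2\rceil$ adjacent points, which is exactly the defining picture of the twisted torus knot.
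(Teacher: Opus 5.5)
Your proposal is correct and follows essentially the same route as the paper. The full twists are pure braids, so the components are the odd- and even-indexed strands. The torus block contributes $pq/4$ and the $s$ full twists contribute $s\,ab$ to the linking number. Restricting the braid to each parity class then exhibits the components as $(p/2,q/2,\lceil r/2\rceil,s)$ and $(p/2,q/2,\lfloor r/2\rfloor,s)$.

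The step you flag as the main obstacle is routine; the paper simply asserts it. Forgetting the even strands turns each factor $\sigma_1\cdots\sigma_{p-1}$ whose moving strand is odd into $\sigma_1\cdots\sigma_{p/2-1}$, and turns every other factor into the identity, giving $(\sigma_1\cdots\sigma_{p/2-1})^{q/2}$. One small correction: your remark that every pair of strands in $(\sigma_1\cdots\sigma_{p-1})^{q}$ crosses a fixed number of times is not literally true for $q<p$, but your count of $pq/2$ inter-component crossings is correct, so nothing depends on it.
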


\begin{proof}
Let \(L_1\) and \(L_2\) denote the two components of \(T(p,q,r,s)\), oriented according to the standard braid orientation.
If \(s>0\), then the added twists contribute only positive crossings between the two components, and the \((p,q)\)-torus part also contributes positive inter-component crossings.
Hence \(\operatorname{lk}(L_1,L_2)>0\), so the linking number cannot be zero. Therefore \(s<0\).

Since the link has two components, \(\gcd(p,q)=2\). In particular \(2\mid p\), and in the standard braid model for the \((p,q)\)-torus link each component contains \(p/2\) strands.
The \((p,q)\)-torus part contributes \(\frac{pq}{2}\) signed crossings between the two components, and therefore
\[
\operatorname{lk}_{\mathrm{torus}}=\frac{pq}{4}.
\]

Now consider the twisting portion. Observe that
\[
(\sigma_1\sigma_2\cdots\sigma_{r-1})^{sr}
=\Big((\sigma_1\sigma_2\cdots\sigma_{r-1})^{r}\Big)^{s}
=(\Delta_r^2)^{\,s},
\]
so it is a power of the full twist \(\Delta_r^2\) on the first \(r\) strands. In particular, its underlying permutation is trivial, and hence it does not change which strand indices belong to which component in the closure.
We may therefore identify the two components throughout by odd and even strand indices.

A single full twist \(\Delta_r^2\) makes each pair of strands cross twice.
If among the first \(r\) strands there are \(a\) strands from one component and \(b\) from the other, then one full twist contributes \(2ab\) signed inter-component crossings, and hence contributes \(ab\) to the linking number.
Therefore, the total contribution of \(s\) full twists is \(s\,ab\).

\medskip
\noindent\emph{Case 1: \(r\) even.}
Then \(a=b=r/2\), so the twisting part contributes
\[
\operatorname{lk}_{\mathrm{twist}}=\frac{r^{2}s}{4}.
\]
Hence
\[
0=\operatorname{lk}(L_1,L_2)
=\operatorname{lk}_{\mathrm{torus}}+\operatorname{lk}_{\mathrm{twist}}
=\frac{pq}{4}+\frac{r^{2}s}{4},
\]
which is equivalent to \(pq=-r^{2}s\).
In this situation each component consists of either the odd-indexed or the even-indexed strands, and each is obtained from the \((p/2,q/2)\)-torus braid by inserting \(s\) full twists on the first \(r/2\) strands.
Thus both components are twisted torus knots of type \((p/2,q/2,r/2,s)\).

\medskip
\noindent\emph{Case 2: \(r\) odd.}
Write \(r=2r'+1\).
Among the first \(r\) strands, one component contains \(r'+1\) strands and the other contains \(r'\) strands.
Thus \(a=r'+1\) and \(b=r'\), and the twisting part contributes
\[
\operatorname{lk}_{\mathrm{twist}}=s\,r'(r'+1).
\]
Therefore
\[
0=\operatorname{lk}(L_1,L_2)=\frac{pq}{4}+s\,r'(r'+1),
\]
which is equivalent to \(pq=-4r'(r'+1)s\).
Finally, the component containing \(r'+1\) strands in the twisted block is obtained by inserting the twists on \(r'+1\) strands, while the other is obtained by inserting the twists on \(r'\) strands.
Hence one component is a twisted torus knot \((p/2,q/2,r'+1,s)\) and the other is \((p/2,q/2,r',s)\).
\end{proof}

Combining Proposition~\ref{prop:linkingnumber} with Lee's classification of unknotted twisted torus knots
(Theorem~\ref{lee}), we obtain a complete description of two--component twisted torus links that are unlinks.

\begin{prop}\label{prop:twocomponents}
Let \(p,q,r,s\) be integers satisfying \(\gcd(p,q)=2\), \(1\le q<p\), \(2\le r\le p\), and \(s\neq0\).
Then the twisted torus link \(T(p,q,r,s)\) has exactly two components, each of which is an unknot, and the
linking number between the two components is zero,  if and only if one of the following holds:
\begin{enumerate}
\item \((p,q,r,s)=(2n+2,2n,2n+1,-1)\) for some integer \(n\ge1\);
\item \((p,q,r,s)=(8,2,4,-1)\);
\item \((p,q,r,s)=(2n,2,2,-n)\) for some integer \(n\ge2\).
\end{enumerate}
\end{prop}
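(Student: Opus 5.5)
The plan is to combine Proposition~\ref{prop:linkingnumber} with Theorem~\ref{lee}, using arithmetic to cut Lee's list down. First I would note that by Proposition~\ref{prop:linkingnumber} vanishing linking number forces \(s<0\). Write \(P=p/2\), \(Q=q/2\); then \(\gcd(P,Q)=1\) and \(1\le Q<P\). The equations become \(PQ=-k^2s\) when \(r=2k\), and \(PQ=-r'(r'+1)s\) when \(r=2r'+1\). The components are the knots \(T(P,Q,k,s)\) in the first case, and \(T(P,Q,r',s)\) and \(T(P,Q,r'+1,s)\) in the second. The condition \(r\le p\) gives \(k\le P\), respectively \(r'+1\le P\), so all these parameters lie in the range where Lee's theorem applies. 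There is one exception: when the number of twisted strands is \(1\), the twist is trivial and the component is the torus knot \(T(P,Q)\), which is an unknot iff \(Q=1\).

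\emph{Even case.} If \(k=1\), the component is \(T(P,Q)\), so \(Q=1\) and \(P=-s\). This gives \((2n,2,2,-n)\) with \(n=P\ge2\), which is family (3). If \(k\ge2\), then \((P,Q,k,s)\) must appear in Lee's list, and I would test each family against \(PQ=-k^2s\):
\begin{itemize}
\item families (1), (2), (6) give \((n+1)n=(n+1)^2\), \((n+1)n=n^2\) and \((n+1)(n-1)=n^2\), all impossible;
\item family (3) gives \((mn\pm1)n=mn^2\), impossible;
\item family (5) gives \((4n\pm1)n=4n^2\), impossible;
\item family (7) would need \(f_{n+1}f_{n-1}=f_n^2\), which Cassini's identity \(f_{n+1}f_{n-1}-f_n^2=\pm1\) forbids;
\item family (4) gives \(n\cdot1=4\), hence \((P,Q,k,s)=(4,1,2,-1)\), i.e.\ \((8,2,4,-1)\).
\end{itemize}

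\emph{Odd case.} Here \(r\ge2\) forces \(r'\ge1\).

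If \(r'=1\), one component is \(T(P,Q)\), so \(Q=1\). Then \(P=-2s\), and the other component \(T(P,1,2,s)\) must be an unknot. Scanning Lee's list with \(q=1\) and \(r=2\) leaves only family (1) with \(n=1\), namely \((2,1,2,-1)\). Family (4) would need \(P=2<3\), and family (3) needs \(n\ge2\). So \((p,q,r,s)=(4,2,3,-1)\), which is family (1) with \(n=1\).

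If \(r'\ge2\), both \(T(P,Q,r',s)\) and \(T(P,Q,r'+1,s)\) must appear in Lee's list with the same \((P,Q,s)\) and consecutive values of \(r\). I would enumerate the possible pairs of families and impose \(PQ=-r'(r'+1)s\).
\begin{itemize}
\item The pair (2)\&(1) with \(r'=n\) gives \(PQ=n(n+1)\), which holds. This yields \((2n+2,2n,2n+1,-1)\).
\item Any pair involving family (3) needs \(P=mn\pm1\) simultaneously with \(r=n\) and with \(r=n\pm1\) (or with another family's \(p\)). Coprimality and \(PQ=-r'(r'+1)s\) should rule this out by direct comparison.
\item Pairs involving families (5), (6), (7) share \(P,Q\) only in isolated small cases. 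I would check each such coincidence, for instance \(f_{n+1}=n+1\) or \(4n\pm1=f_{n+1}\), against the equation and discard it.
\end{itemize}

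Conversely, in each of the three listed cases the computations above show that the linking number vanishes and each component is an unknot by Theorem~\ref{lee}, or by being \(T(P,1)\).

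The main obstacle is the odd case with \(r'\ge2\). There one must exhaust all coincidences between different Lee families sharing the same \((P,Q,s)\), including small-\(n\) overlaps of the Fibonacci and \((4n\pm1,n,2n)\) families. I would handle this by listing the finitely many parameter collisions for small \(n\) explicitly and using monotonicity for large \(n\).
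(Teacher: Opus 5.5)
Your proposal is correct and is essentially the paper's proof. It uses the same split into $r=2$, even $r\ge4$, $r=3$ and odd $r\ge5$, with Proposition~\ref{prop:linkingnumber} supplying the constraint $PQ=-k^2s$ or $PQ=-r'(r'+1)s$ and Theorem~\ref{lee} the list to test. Your family-by-family even-case check (with Cassini for family (7)) is more explicit than the paper's ``direct inspection.''

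The odd case $r'\ge2$, which you flag as the main obstacle, is no harder. Family (3) is the only one with $s\neq-1$, and it pins $r=Q$, so it cannot contain both components and $s=-1$; this is how the paper argues. Then impose $PQ=r'(r'+1)$ on the component with $r'+1$ twisted strands, exactly as in your even case. Only family (1) survives, and its partner is family (2), so you need no analysis of collisions between Lee's families and no monotonicity argument.
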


\begin{proof}
Assume that \(T(p,q,r,s)\) has two components, both unknotted, and that the linking number between them is zero.
As noted earlier, the linking number of a twisted torus link is positive when \(s>0\); hence we must have \(s<0\).

By Proposition~\ref{prop:linkingnumber}, the vanishing of the linking number imposes strong algebraic
constraints on the parameters and shows that both components are themselves twisted torus knots.
In each of the cases below, these components must therefore appear in the list of unknotted twisted
torus knots given in Theorem~\ref{lee}.

\medskip
\noindent\textbf{Case 1: \(r\ge4\) and \(r\) even.}
Proposition~\ref{prop:linkingnumber} yields
\[
pq=-r^{2}s,
\]
and each component is a twisted torus knot of type \((p/2,q/2,r/2,s)\).
Writing \(p'=p/2\), \(q'=q/2\), and \(r'=r/2\), we obtain
\[
p'q'=-r'^{\,2}s,
\]
and \((p',q',r',s)\) must be an unknot.

Comparing with the seven families listed in Theorem~\ref{lee}, a direct inspection shows that the
only possibility is Case~(4), namely \((n,1,2,-1)\).
This forces \(r'=2\), \(s=-1\), and \(p'q'=4\), hence \((p',q')=(4,1)\).
Therefore
\[
(p,q,r,s)=(8,2,4,-1).
\]

\medskip
\noindent\textbf{Case 2: \(r\ge5\) and \(r\) odd.}
Write \(r=2r'+1\).
By Proposition~\ref{prop:linkingnumber}, the two components are twisted torus knots of types
\[
(p/2,q/2,r'+1,s)\quad\text{and}\quad(p/2,q/2,r',s),
\]
and the vanishing of the linking number gives
\[
pq=-4r'(r'+1)s.
\]
Both components must be unknots.

Among the cases in Theorem~\ref{lee}, the only family with \(s\neq-1\) is Case~(3).
However, it is impossible for both \(r'\) and \(r'+1\) to simultaneously match the third parameter
in that family. Hence \(s=-1\).

Comparing \((p/2,q/2,r',-1)\) with the remaining cases in Theorem~\ref{lee}, the only possibility is
Case~(2), namely \((n+1,n,n,-1)\).
Thus
\[
(p/2,q/2,r')=(n+1,n,n),
\]
and the other component is \((n+1,n,n+1,-1)\), which is also an unknot by Case~(1) of Theorem~\ref{lee}.
Substituting back yields
\[
(p,q,r,s)=(2n+2,2n,2n+1,-1),
\qquad n\ge2.
\]

\medskip
\noindent\textbf{Case 3: \(r=2\).}
Here Proposition~\ref{prop:linkingnumber} gives
\[
pq=-4s,
\]
and each component is a torus knot \((p/2,q/2)\).
For these to be unknots we must have \(q/2=1\), hence \(q=2\).
Writing \(p=2n\) with \(n\ge2\), we obtain \(s=-n\), and therefore
\[
(p,q,r,s)=(2n,2,2,-n).
\]

\medskip
\noindent\textbf{Case 4: \(r=3\).}
Here \(r'=1\), and Proposition~\ref{prop:linkingnumber} gives
\[
pq=-8s.
\]
The two components are a torus knot \((p/2,q/2)\) and a twisted torus knot \((p/2,q/2,2,s)\).
For the torus knot to be an unknot, we must have \(q=2\), hence \(p=-4s\).

The second component is therefore \((-2s,1,2,s)\).
Comparing with Theorem~\ref{lee}, the only admissible case is \(s=-1\), yielding
\[
(p,q,r,s)=(4,2,3,-1),
\]
which corresponds to the family \((2n+2,2n,2n+1,-1)\) with \(n=1\).

\medskip
Combining all cases completes the proof.
\end{proof}

\begin{lemma}\label{lemma2}
The twisted torus links $T(2n+2,2n,2n+1,-1)$ are unlinks for integers $n\ge 1$.
\end{lemma}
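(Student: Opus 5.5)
The plan is to work directly with the braid presentation from the Preliminaries and simplify it by braid relations, conjugation and Markov moves until it visibly closes to the trivial two-component link. Put \(p=2n+2\), \(q=2n=p-2\), \(r=2n+1=p-1\), and write \(\delta_p=\sigma_1\sigma_2\cdots\sigma_{p-1}\). Since \(\Delta_p^2=\delta_p^{\,p}\), the link \(T(2n+2,2n,2n+1,-1)\) is the closure of
\[
\delta_p^{\,p-2}\,\Delta_{p-1}^{-2}=\Delta_p^{2}\,\delta_p^{-2}\,\Delta_{p-1}^{-2}.
\]
Here \(\Delta_{p-1}^{-2}=(\sigma_{p-2}^{-1}\cdots\sigma_1^{-1})^{p-1}\) is the negative full twist on the first \(p-1\) strands. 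Throughout I will use the paper's crossing convention (Figure~\ref{fig:twistedtorus}) only through these formal braid words, so the sign convention causes no trouble.

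\emph{Step 1: cancel the negative twist.} I will use the standard factorization of the full twist,
\[
\Delta_p^2=\Delta_{p-1}^2\,A,\qquad A=\sigma_{p-1}\sigma_{p-2}\cdots\sigma_1\sigma_1\cdots\sigma_{p-2}\sigma_{p-1}.
\]
Substituting gives the word \(\Delta_{p-1}^{2}\,A\,\delta_p^{-2}\,\Delta_{p-1}^{-2}\). Conjugating this word by \(\Delta_{p-1}^{2}\) (cyclically moving the last factor to the front) cancels the two twists, so the closure is unchanged and equals the closure of \(A\,\delta_p^{-2}\). Expanding \(\delta_p^{-2}=(\sigma_{p-1}^{-1}\cdots\sigma_1^{-1})^2\), the second half \(\sigma_1\cdots\sigma_{p-1}\) of \(A\) cancels the first copy of \(\sigma_{p-1}^{-1}\cdots\sigma_1^{-1}\). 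The link is therefore the closure of
\[
\beta_p=\sigma_{p-1}\sigma_{p-2}\cdots\sigma_1\;\sigma_{p-1}^{-1}\sigma_{p-2}^{-1}\cdots\sigma_1^{-1}\in B_p .
\]

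\emph{Step 2: show \(\widehat{\beta_m}\) is trivial.} I will prove by induction on \(m\) that the closure of \(\beta_m\) is a trivial link. Its number of components is determined by the permutation of \(\beta_m\), which is the square of an \(m\)-cycle; for \(m=p\) even this gives two components, matching \(\gcd(p,q)=2\). The base case is \(\beta_2=\sigma_1\sigma_1^{-1}=1\), whose closure is the two-component unlink.

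For the inductive step I will first use the relation \(\sigma_{i}\sigma_{i-1}\sigma_i^{-1}=\sigma_{i-1}^{-1}\sigma_i\sigma_{i-1}\) together with commutations of distant generators. The goal is to rewrite \(\beta_m\) so that \(\sigma_{m-1}\) occurs exactly once, possibly after a conjugation. A Markov destabilization then removes one strand and should produce a conjugate of \(\beta_{m-1}\) or a similar word. If the word does not reduce so cleanly, I will instead aim for a reduction \(m\to m-2\) by two destabilizations, which also preserves the parity needed for the component count.

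The main obstacle is this inductive simplification. One must find the right sequence of braid relations so that the top generator appears once and the remaining word is recognizably of the same shape. As a fallback, I would argue diagrammatically: in the closure of \(\beta_p\), the positive band \(\sigma_{p-1}\cdots\sigma_1\) is followed by the negative band \(\sigma_{p-1}^{-1}\cdots\sigma_1^{-1}\). One strand passes over all the others and then back under them, so the closed diagram can be isotoped to two split unknotted circles. Small cases (\(n=1\): \(\beta_4\) by hand) give a check of either route.
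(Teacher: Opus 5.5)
Your Step 1 is correct. The factorization $\Delta_p^2=\Delta_{p-1}^2A$ plus one conjugation is a shorter route than the paper's inductive braid lemma.

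The gap is Step 2. That step is the whole content of showing the closure is trivial, and you leave it as a plan, which you yourself call the main obstacle. Moreover, your primary route cannot work. The permutation of $\beta_m$ is the square of an $m$-cycle, so $\widehat{\beta_m}$ has two components for $m$ even and one for $m$ odd. Markov moves and conjugation preserve the link, so no destabilization of $\beta_m$ can yield a conjugate of $\beta_{m-1}$. Doing exactly what you describe (one braid relation to make $\sigma_{m-1}$ occur once, then destabilize) produces $\beta_{m-1}\sigma_{m-2}^{-1}$, which is not of the same shape.

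The diagrammatic fallback misdescribes the braid. The strand that crosses over all others (from position $p$ to position $1$) is not the strand that then crosses under all others; that one started at position $p-1$. No isotopy to split circles is actually exhibited.

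The missing ingredient is the relation
\[
(\sigma_{m-1}\cdots\sigma_1)\,\sigma_i^{-1}=\sigma_{i-1}^{-1}\,(\sigma_{m-1}\cdots\sigma_1),\qquad 2\le i\le m-1,
\]
which follows from one braid relation and far commutations. Pushing $\sigma_{m-1}^{-1}\cdots\sigma_2^{-1}$ leftward through the positive block gives
\[
\beta_m=(\sigma_{m-2}^{-1}\cdots\sigma_1^{-1})(\sigma_{m-1}\cdots\sigma_2).
\]
For $m=2n+2$ this is exactly the braid the paper reaches before reading off the unlink from Figure~\ref{fig:braidword}.

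In this word $\sigma_{m-1}$ and $\sigma_1$ each occur exactly once. Conjugate $\sigma_{m-2}\cdots\sigma_2$ to the front and destabilize at the last strand. Then destabilize the now-unique $\sigma_1^{-1}$ at the first strand, for instance after conjugating by the half twist. What remains is $\sigma_{m-2}\cdots\sigma_2\,\sigma_{m-2}^{-1}\cdots\sigma_2^{-1}$ on the middle $m-2$ strands, which after reindexing is $\beta_{m-2}$. This is precisely your $m\to m-2$ induction. It terminates at $\beta_2=1$, whose closure is the two-component unlink. With this inserted your proof is complete, and it replaces the paper's pictorial final step with a purely algebraic one.
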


\begin{proof}
Let $\sigma_i, i=1, \dots, k-1$ be the generators of the braid group with $k$ strands. For different indices $i, j$ that are not adjacent we have $\sigma_i\sigma_j=\sigma_j\sigma_i$, while $\sigma_i\sigma_{i+1}\sigma_i=\sigma_{i+1}\sigma_i\sigma_{i+1}$. 

By definition the link $T(2n+2,2n,2n+1,-1)$ is the closure of the braid
\[
(\sigma_1\sigma_2 \dots \sigma_{2n+1})^{2n}(\sigma_{2n}^{-1}\sigma_{2n-1}^{-1} \dots\sigma_{1}^{-1})^{2n+1}, 
\]
which is equal to the closure of the braid
\[
(\sigma_{2n}^{-1}\sigma_{2n-1}^{-1} \dots\sigma_{1}^{-1})^{2n+1}(\sigma_1\sigma_2 \dots \sigma_{2n+1})^{2n}. 
\]

We claim the following formula, whose proof we postpone to the next lemma,  Lemma~\ref{lem:braidgrouparg}: For integers $k\ge 0$ and $n\ge 1$ such that $(2n-k-1)\ge 0$, 
\[
\begin{aligned}
&(\sigma_{2n}^{-1}\sigma_{2n-1}^{-1} \dots\sigma_{1}^{-1})^{2n+1}(\sigma_1\sigma_2 \dots \sigma_{2n+1})^{2n}\\
=&(\sigma_{2n}^{-1}\sigma_{2n-1}^{-1} \dots\sigma_{1}^{-1})^{2n-k}\sigma_{2n+1}\sigma_{2n}\dots\sigma_{2n+1-k}(\sigma_1\sigma_2 \dots \sigma_{2n+1})^{2n-k-1}. \\
\end{aligned}
\]

Applying the formula it follows that
\[
\begin{aligned}
&(\sigma_{2n}^{-1}\sigma_{2n-1}^{-1} \dots\sigma_{1}^{-1})^{2n+1}(\sigma_1\sigma_2 \dots \sigma_{2n+1})^{2n}\\
=&(\sigma_{2n}^{-1}\sigma_{2n-1}^{-1} \dots\sigma_{1}^{-1})\sigma_{2n+1}\sigma_{2n}\dots\sigma_{2}. \\
\end{aligned}
\]

The closure of this braid is described as in Figure~\ref{fig:braidword}: the strands with odd indices
\(1,3,5,\dots,2n+1\) form one component, while the strands with even indices
\(2,4,6,\dots,2n+2\) form the other. These two components do not link with each other, and each
component is an unknot.

Hence the closure is the two--component unlink, and therefore
\(T(2n+2,2n,2n+1,-1)\) is an unlink for all \(n\ge1\).
\end{proof}

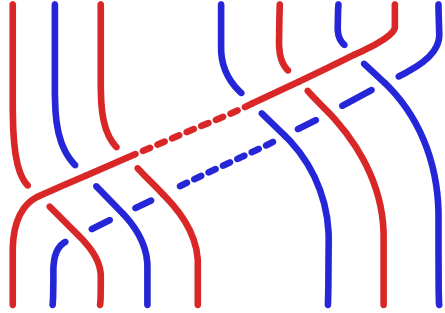
\begin{figure}[h]
    \centering
\definecolor{linkcolor0}{rgb}{0.85, 0.15, 0.15}
\definecolor{linkcolor1}{rgb}{0.15, 0.15, 0.85}
\scalebox{1}{ 
\begin{tikzpicture}[line width=2.5, line cap=round, line join=round]
 
  \begin{scope}[color=linkcolor0]
   
    \draw (4.48, 3.45) .. controls (4.21, 3.33) and (4.13, 3.02) .. 
          (4.13, 2.72) .. controls (4.13, 2.49) and (4.13, 2.15) .. 
        (4.13, 2);
\draw         (4.13, 6) .. controls (4.13, 5.75) and (4.13, 5.12) .. 
          (4.13, 4.77) .. controls (4.13, 4.35) and (4.13, 3.79) .. (4.33, 3.59);

    \draw (4.62, 3.31) .. controls (4.75, 3.18) and (4.88, 3.05) .. (5.01, 2.92);

    \draw (5.01, 2.92) .. controls (5.16, 2.77) and (5.30, 2.59) .. 
         (5.30, 2.38) .. controls (5.30, 2.21) and (5.30, 2.04) .. 
          (5.29, 2);
\draw          (5.30, 6) .. controls (5.30, 5.72) and (5.30, 5.33) .. 
          (5.30, 5.02) .. controls (5.30, 4.69) and (5.30, 4.32) .. (5.51, 4.10);

    \draw (5.79, 3.82) .. controls (5.90, 3.70) and (6.02, 3.59) .. (6.13, 3.47);

    \draw (6.13, 3.47) .. controls (6.38, 3.22) and (6.59, 2.91) .. 
          (6.59, 2.55) .. controls (6.59, 2.23) and (6.59, 2) .. 
          (6.59, 2);
\draw          (7.68, 6) .. controls (7.68, 5.94) and (7.67, 5.72) .. 
         (7.67, 5.50) .. controls (7.67, 5.36) and (7.69, 5.22) .. (7.79, 5.12);

    \draw (8.05, 4.85) .. controls (8.14, 4.75) and (8.24, 4.65) .. (8.33, 4.56);

    \draw (8.33, 4.56) .. controls (8.76, 4.11) and (9.06, 3.54) .. 
          (9.06, 2.91) .. controls (9.06, 2.37) and (9.06, 2) .. 
         (9.06, 2);
\draw          (9.21, 6) .. controls (9.21, 6) and (9.21, 5.82) .. 
         (9.21, 5.70) .. controls (9.21, 5.59) and (8.90, 5.43) .. (8.67, 5.33);

    \draw (8.67, 5.33) .. controls (8.43, 5.21) and (8.18, 5.09) .. (7.93, 4.97);

    \draw (7.93, 4.97) .. controls (7.72, 4.87) and (7.52, 4.78) .. (7.31, 4.68);

    \draw[dashed] (7.31, 4.68) .. controls (7.07, 4.56) and (6.83, 4.46) .. 
          (6.58, 4.36) .. controls (6.27, 4.23) and (5.96, 4.09) .. (5.65, 3.96);

    \draw (5.65, 3.96) .. controls (5.47, 3.88) and (5.28, 3.80) .. (5.10, 3.72);

    \draw (5.10, 3.72) .. controls (4.89, 3.63) and (4.68, 3.54) .. (4.48, 3.45);
   
  \end{scope}

  \begin{scope}[color=linkcolor1]
    
    \draw (4.83, 2.84) .. controls (4.70, 2.77) and (4.67, 2.61) .. 
          (4.67, 2.45) .. controls (4.67, 2.29) and (4.67, 2.12) .. 
          (4.66, 2);
\draw          (4.69, 6) .. controls (4.69, 6) and (4.69, 5.27) .. 
          (4.69, 4.95) .. controls (4.69, 4.56) and (4.69, 4.14) .. (4.96, 3.86);

    \draw (5.24, 3.58) .. controls (5.36, 3.46) and (5.48, 3.33) .. (5.60, 3.21);

    \draw (5.60, 3.21) .. controls (5.78, 3.02) and (5.92, 2.78) .. 
          (5.92, 2.51) .. controls (5.92, 2.25) and (5.92, 2.00) .. 
          (5.92, 2);
\draw          (6.90, 6) .. controls (6.90, 6) and (6.90, 5.62) .. 
          (6.90, 5.41) .. controls (6.90, 5.18) and (7.00, 4.97) .. (7.17, 4.82);

    \draw (7.44, 4.55) .. controls (7.54, 4.45) and (7.65, 4.35) .. (7.75, 4.25);

    \draw (7.75, 4.25) .. controls (8.12, 3.89) and (8.33, 3.39) .. 
          (8.33, 2.87) .. controls (8.33, 2.40) and (8.32, 2) .. 
          (8.32, 2);
\draw          (8.46, 6) .. controls (8.46, 6) and (8.45, 5.84) .. 
          (8.45, 5.69) .. controls (8.45, 5.60) and (8.47, 5.52) .. (8.54, 5.46);

    \draw (8.80, 5.21) .. controls (8.89, 5.13) and (8.98, 5.04) .. (9.08, 4.95);

    \draw (9.08, 4.95) .. controls (9.56, 4.49) and (9.79, 3.84) .. 
          (9.79, 3.18) .. controls (9.79, 2.51) and (9.79, 2) .. 
          (9.80, 2);
\draw         (9.79, 6) .. controls (9.79, 6) and (9.79, 5.88) .. 
          (9.80, 5.60) .. controls (9.80, 5.33) and (9.51, 5.17) .. (9.26, 5.04);

    \draw (8.90, 4.86) .. controls (8.77, 4.79) and (8.64, 4.72) .. (8.51, 4.65);

    \draw (8.16, 4.46) .. controls (8.08, 4.42) and (8.00, 4.38) .. (7.92, 4.34);

    \draw[dashed] (7.59, 4.17) .. controls (7.48, 4.11) and (7.17, 3.97) .. 
          (6.96, 3.87) .. controls (6.74, 3.77) and (6.45, 3.63) .. (6.31, 3.56);

    \draw (5.97, 3.39) .. controls (5.90, 3.36) and (5.83, 3.32) .. (5.76, 3.29);

    \draw (5.42, 3.13) .. controls (5.34, 3.09) and (5.26, 3.05) .. (5.18, 3.01);
    
  \end{scope}
  
\end{tikzpicture}

}
\caption{The braid \((\sigma_{2n}^{-1}\sigma_{2n-1}^{-1} \dots\sigma_{1}^{-1})\sigma_{2n+1}\sigma_{2n}\dots\sigma_{2}\). Its closure is the unlink with two components, given by the red strands and the blue strands respectively. The red component is placed above the blue component, and each component is an unknot. 
}
\label{fig:braidword}
\end{figure}

Now we turn to the proof of the claim in the proof above: 

\begin{lemma}
For integers $k\ge 0$ and $n\ge 1$ such that $(2n-k-1)\ge 0$, we have
\[
\begin{aligned}
&(\sigma_{2n}^{-1}\sigma_{2n-1}^{-1} \dots\sigma_{1}^{-1})^{2n+1}(\sigma_1\sigma_2 \dots \sigma_{2n+1})^{2n}\\
=&(\sigma_{2n}^{-1}\sigma_{2n-1}^{-1} \dots\sigma_{1}^{-1})^{2n-k}\sigma_{2n+1}\sigma_{2n}\dots\sigma_{2n+1-k}(\sigma_1\sigma_2 \dots \sigma_{2n+1})^{2n-k-1}. \\
\end{aligned}
\]
\label{lem:braidgrouparg}
\end{lemma}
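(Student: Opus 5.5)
The plan is an induction on \(k\), after isolating two elementary relations. Throughout, write
\[
A=\sigma_{2n}^{-1}\sigma_{2n-1}^{-1}\cdots\sigma_1^{-1},\qquad B=\sigma_1\sigma_2\cdots\sigma_{2n+1},\qquad W_k=\sigma_{2n+1}\sigma_{2n}\cdots\sigma_{2n+1-k},
\]
all in the braid group on \(2n+2\) strands, with the convention \(W_0=\sigma_{2n+1}\). The claim then reads \(A^{2n+1}B^{2n}=A^{2n-k}W_kB^{2n-k-1}\) for \(0\le k\le 2n-1\).

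\textbf{Two relations.} The first observation is that
\[
AB=\sigma_{2n}^{-1}\cdots\sigma_1^{-1}\,\sigma_1\cdots\sigma_{2n}\,\sigma_{2n+1}=\sigma_{2n+1}.
\]
This cancels telescopically, so \(A=\sigma_{2n+1}B^{-1}\). The second ingredient is the standard conjugation rule for the braid \(B=\sigma_1\cdots\sigma_{2n+1}\):
\[
B\sigma_iB^{-1}=\sigma_{i+1}\quad (1\le i\le 2n),\qquad\text{equivalently}\qquad B^{-1}\sigma_jB=\sigma_{j-1}\quad (2\le j\le 2n+1).
\]
To verify it, I would commute \(\sigma_i\) past \(\sigma_1\cdots\sigma_{i-2}\) using far commutativity. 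I would then apply the braid relation in the form \(\sigma_{i-1}\sigma_i\sigma_{i+1}\sigma_i=\sigma_{i+1}\sigma_{i-1}\sigma_i\sigma_{i+1}\), which follows from \(\sigma_i\sigma_{i+1}\sigma_i=\sigma_{i+1}\sigma_i\sigma_{i+1}\) together with commutativity of \(\sigma_{i-1}\) and \(\sigma_{i+1}\). Finally I would commute the result past the remaining \(\sigma_{i+2}\cdots\sigma_{2n+1}\).

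\textbf{Base case \(k=0\).} Split off one factor from each power and use \(AB=\sigma_{2n+1}\):
\[
A^{2n+1}B^{2n}=A^{2n}(AB)B^{2n-1}=A^{2n}W_0B^{2n-1}.
\]

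\textbf{Inductive step.} Assume the formula holds for some \(k\) with \(k+1\le 2n-1\). Then \(2n-k\ge 1\) and \(2n-k-1\ge1\), so one factor \(A\) can be peeled off on the left of \(W_k\) and one factor \(B\) on the right. It therefore suffices to prove the key identity \(AW_kB=W_{k+1}\). Using \(A=\sigma_{2n+1}B^{-1}\), we get
\[
AW_kB=\sigma_{2n+1}\,(B^{-1}W_kB).
\]
Conjugation is a homomorphism, so it acts letter by letter on \(W_k\). Every index in \(W_k\) lies between \(2n+1-k\ge 3\) and \(2n+1\), so the rule \(B^{-1}\sigma_jB=\sigma_{j-1}\) applies to each letter. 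This gives
\[
B^{-1}W_kB=\sigma_{2n}\sigma_{2n-1}\cdots\sigma_{2n-k},\qquad\text{hence}\qquad AW_kB=\sigma_{2n+1}\sigma_{2n}\cdots\sigma_{2n-k}=W_{k+1}.
\]
Substituting this into the inductive hypothesis yields the formula for \(k+1\).

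\textbf{Main obstacle.} The only delicate point is bookkeeping of the index ranges. The conjugation rule must never be applied to \(\sigma_1\), which would fall outside its range; this is guaranteed because \(2n+1-k\ge 2\) throughout, which is exactly where the hypothesis \(2n-k-1\ge0\) enters. One must also check that the exponents of \(A\) and \(B\) remain nonnegative at each stage. With the conjugation rule in hand, the rest is routine.
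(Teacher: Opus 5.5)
Your proof is correct. Its skeleton matches the paper's: induction on $k$, a base case coming from $AB=\sigma_{2n+1}$, and a reduction of the inductive step to the single identity $AW_kB=W_{k+1}$. The difference is in how that key identity is proved.

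\emph{The paper's route.} It works locally. It commutes $\sigma_{2n-k-1}^{-1}\cdots\sigma_1^{-1}$ and $\sigma_1\cdots\sigma_{2n-k-1}$ past $W_k$ so that they cancel, which leaves $(\sigma_{2n}^{-1}\cdots\sigma_{2n-k}^{-1})\,W_k\,(\sigma_{2n-k}\cdots\sigma_{2n+1})$. It then invokes a ``palindrome'' identity,
$\sigma_{2n+1}\cdots\sigma_{2n+1-k}\sigma_{2n-k}\sigma_{2n+1-k}\cdots\sigma_{2n+1}=\sigma_{2n-k}\cdots\sigma_{2n}\sigma_{2n+1}\sigma_{2n}\cdots\sigma_{2n-k}$.
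The inductive proof of this identity is only illustrated on one example.

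\emph{Your route.} You rewrite $A=\sigma_{2n+1}B^{-1}$ and use the standard fact that conjugation by $B=\sigma_1\cdots\sigma_{2n+1}$ shifts indices. Then $B^{-1}W_kB$ is just $W_k$ with every index lowered by one, and the step becomes a one-line computation.

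\emph{Comparison.} Your argument is shorter and rests on a well-known lemma rather than an ad hoc one. It also makes transparent why $W_k$ gains exactly one letter per step. Iterating it even gives the closed form $A^j=W_{j-1}B^{-j}$ for $1\le j\le 2n+1$, from which the whole statement follows at once. The paper's route uses only local braid moves, but it needs the auxiliary palindrome identity.

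\emph{A cosmetic slip.} In your sketch of $B\sigma_iB^{-1}=\sigma_{i+1}$, the letter commuted past $\sigma_1\cdots\sigma_{i-2}$ should be $\sigma_{i+1}$, starting from $\sigma_{i+1}B$. Equivalently, starting from $B\sigma_i$, you first move $\sigma_i$ left past $\sigma_{i+2}\cdots\sigma_{2n+1}$. For $i=1$ the ordinary braid relation replaces your four-letter identity. Since the conjugation rule is standard, nothing depends on this.
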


\begin{proof}
We prove this claim inductively. For $k=0$ this is clear. In general suppose $k$ and $k+1$ are in the range, and the claim is true for $k$. We would like to prove the case for $k+1$, which is equivalent to the following equality
\[
\begin{aligned}
&(\sigma_{2n}^{-1}\sigma_{2n-1}^{-1} \dots\sigma_{1}^{-1})^{2n-k}\sigma_{2n+1}\sigma_{2n}\dots\sigma_{2n+1-k}(\sigma_1\sigma_2 \dots \sigma_{2n+1})^{2n-k-1} \\
\stackrel{?}{=}&(\sigma_{2n}^{-1}\sigma_{2n-1}^{-1} \dots\sigma_{1}^{-1})^{2n-k-1}\sigma_{2n+1}\sigma_{2n}\dots\sigma_{2n-k}(\sigma_1\sigma_2 \dots \sigma_{2n+1})^{2n-k-2}.  \\
\end{aligned}
\]

Canceling terms appearing on both sides, it suffices to prove
\[
(\sigma_{2n}^{-1}\sigma_{2n-1}^{-1} \dots\sigma_{1}^{-1})\sigma_{2n+1}\sigma_{2n}\dots\sigma_{2n+1-k}(\sigma_1\sigma_2 \dots \sigma_{2n+1})
\stackrel{?}{=} \sigma_{2n+1}\sigma_{2n}\dots\sigma_{2n-k}. 
\]

Note that the left hand side is
\[
\begin{aligned}
&(\sigma_{2n}^{-1}\sigma_{2n-1}^{-1} \dots\sigma_{1}^{-1})\sigma_{2n+1}\sigma_{2n}\dots\sigma_{2n+1-k}(\sigma_1\sigma_2 \dots \sigma_{2n+1})\\
=&(\sigma_{2n}^{-1}\sigma_{2n-1}^{-1} \dots\sigma_{2n-k}^{-1})\sigma_{2n+1}\sigma_{2n}\dots\sigma_{2n+1-k}(\sigma_{2n-k}\sigma_{2n+1-k} \dots \sigma_{2n+1}). \\
\end{aligned}
\]

We claim that this is equal to 
\[
\begin{aligned}
&(\sigma_{2n}^{-1}\sigma_{2n-1}^{-1} \dots\sigma_{2n-k}^{-1})\sigma_{2n-k}\sigma_{2n+1-k}\dots\sigma_{2n}\sigma_{2n+1}\sigma_{2n} \dots \sigma_{2n+1-k}\sigma_{2n-k}\\
=&\sigma_{2n+1}\sigma_{2n} \dots \sigma_{2n+1-k}\sigma_{2n-k}, \\
\end{aligned}
\]
which is just the right hand side. 

It remains to prove the claim, or equivalently
\[
\begin{aligned}
&\sigma_{2n+1}\sigma_{2n}\dots\sigma_{2n+1-k}\sigma_{2n-k}\sigma_{2n+1-k} \dots \sigma_{2n+1}\\
=&\sigma_{2n-k}\sigma_{2n+1-k}\dots\sigma_{2n}\sigma_{2n+1}\sigma_{2n} \dots \sigma_{2n+1-k}\sigma_{2n-k}. \\
\end{aligned}
\]

This is proved using a simple induction of the following form: 
\[
\begin{aligned}
&\sigma_{2n+1} &&\sigma_{2n} &&\sigma_{2n-1} &&\sigma_{2n-2} &&\sigma_{2n-1}  &&\sigma_{2n}  &&\sigma_{2n+1}\\
=&\sigma_{2n+1} &&\sigma_{2n} &&\sigma_{2n-2} &&\sigma_{2n-1} &&\sigma_{2n-2}  &&\sigma_{2n}  &&\sigma_{2n+1}\\
=&\sigma_{2n+1} &&\sigma_{2n-2} &&\sigma_{2n} &&\sigma_{2n-1} &&\sigma_{2n}  &&\sigma_{2n-2}  &&\sigma_{2n+1}\\
=&\sigma_{2n+1} &&\sigma_{2n-2} &&\sigma_{2n-1} &&\sigma_{2n} &&\sigma_{2n-1}  &&\sigma_{2n-2}  &&\sigma_{2n+1}\\
=&\sigma_{2n-2} &&\sigma_{2n-1} &&\sigma_{2n+1} &&\sigma_{2n} &&\sigma_{2n+1} &&\sigma_{2n-1}  &&\sigma_{2n-2} \\
=&\sigma_{2n-2} &&\sigma_{2n-1} &&\sigma_{2n} &&\sigma_{2n+1} &&\sigma_{2n} &&\sigma_{2n-1}  &&\sigma_{2n-2}.  \\
\end{aligned}
\]
\end{proof}

\begin{lemma}\label{lem:824-unlink}
The twisted torus link \(T(8,2,4,-1)\) is the two-component unlink.
\end{lemma}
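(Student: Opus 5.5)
The plan is to argue directly with the braid, as in Lemma~\ref{lemma2}. We will simplify the braid word using braid relations, conjugation and Markov destabilizations until the closure is visibly a split union of two unknots.

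Before that, we record what is already known. By Proposition~\ref{prop:linkingnumber} with \(r=4\) even, each component of \(T(8,2,4,-1)\) is the twisted torus knot \(T(4,1,2,-1)\). This knot is an unknot by Case~(4) of Theorem~\ref{lee}. The linking number is \(pq/4+r^2s/4=4-4=0\). So only splitness remains, and the algebraic manipulation below has to supply it.

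By definition, \(T(8,2,4,-1)\) is the closure of the \(8\)-strand braid
\[
\beta=(\sigma_1\sigma_2\cdots\sigma_7)^{2}\,(\sigma_3^{-1}\sigma_2^{-1}\sigma_1^{-1})^{4}.
\]

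\textbf{Step 1: remove the outer strands.} First we would reduce the number of strands. In \((\sigma_1\cdots\sigma_7)^2\), the generators \(\sigma_5,\sigma_6,\sigma_7\) appear only in the torus part. We will use the identity
\[
(\sigma_1\cdots\sigma_7)^2=(\sigma_1\cdots\sigma_7)(\sigma_1\cdots\sigma_5)\,\sigma_6\sigma_7 .
\]
Then we commute \(\sigma_7\), which commutes with everything except \(\sigma_6\), around the closure and destabilize. We repeat this for \(\sigma_6\) and \(\sigma_5\), at each step rewriting the remaining word so that the highest generator occurs exactly once. This is the standard fact that \(T(p,2)\) has a braid on fewer strands, performed while keeping the twist region on strands \(1\)–\(4\) untouched. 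The target is a braid on \(5\) or \(6\) strands of the form \(W\,(\sigma_3^{-1}\sigma_2^{-1}\sigma_1^{-1})^4\), where \(W\) is a short positive word.

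\textbf{Step 2: cancel the negative full twist.} Next we would imitate Lemma~\ref{lem:braidgrouparg}. We write \((\sigma_3^{-1}\sigma_2^{-1}\sigma_1^{-1})^4=\Delta_4^{-2}\) and use the fact that \(\Delta_4^2\) is central in \(B_4\). We then try to absorb \(\Delta_4^{-2}\) into \(W\), by writing \(W\) as a product of the form \((\sigma_1\sigma_2\sigma_3\sigma_4)^k\cdots\) and cancelling factors of \(\sigma_3^{-1}\sigma_2^{-1}\sigma_1^{-1}\) against \(\sigma_1\sigma_2\sigma_3\) one at a time. The sliding moves are the same as those in the inductive step of Lemma~\ref{lem:braidgrouparg}.

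\textbf{Step 3: read off the link.} The expected outcome is a short mixed word in which the odd-indexed strands and the even-indexed strands form two components, with all crossings between the two components being of the same over/under type. This is analogous to Figure~\ref{fig:braidword}. Such a closure is split, with one component lying entirely above the other. Each component is then an unknot, which is consistent with the Lee computation recorded above, so \(T(8,2,4,-1)\) is the two-component unlink.

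I expect Step 2 to be the main obstacle: finding the right intermediate normal form in which the negative twist cancels cleanly. If the algebra becomes unwieldy, the fallback is a diagrammatic argument. We would draw \(T(8,2,4,-1)\) on the torus picture of Figure~\ref{fig:twistedtorus}, observe that one component bounds a disk disjoint from the other after an explicit isotopy (untwisting along the twisting disk), and then conclude using the unknottedness of the components.
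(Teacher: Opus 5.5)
\textbf{There is a genuine gap: the splitting of the two components is never proved.}

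What you actually establish is correct: each component is $T(4,1,2,-1)$, hence unknotted, and the linking number is $0$. But these facts do not imply the statement. The Whitehead link has two unknotted components with linking number $0$ and is not split. As you note yourself, the whole content of the lemma is the splitting, and the proposal never delivers it.

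\begin{itemize}
\item Step~1 is only sketched. Your displayed identity is a tautology in which $\sigma_7$ still occurs twice, so no destabilization is available yet.
\item Step~2 is something you would ``try''. It also leans on the centrality of $\Delta_4^2$, which holds in $B_4$ but not in the $5$- or $6$-strand groups where your Step~1 stops, because $\Delta_4^2$ does not commute with $\sigma_4$.
\item Step~3 states an ``expected outcome'' rather than deriving one.
\item The fallback fails as stated. Untwisting along the twisting disk is not an isotopy of the link in $S^3$: it just returns $T(8,2)$, whose components have linking number $4$. So it cannot exhibit a splitting sphere.
\end{itemize}

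\textbf{How to complete the braid route.} Push Step~1 all the way down to four strands. Put $\epsilon_k=\sigma_1\cdots\sigma_{k-1}$.
\begin{itemize}
\item Conjugating by $\epsilon_8^4$, using $\epsilon_8\sigma_i\epsilon_8^{-1}=\sigma_{i+1}$, moves the twist to $(\sigma_5\sigma_6\sigma_7)^{-4}$.
\item Use the identity $\sigma_1^{j}\epsilon_k^2=\sigma_2\sigma_1\sigma_2^{j}(\sigma_2\cdots\sigma_{k-1})^2$, then cyclic permutations and a destabilization at strand~$1$. This turns $\sigma_1^{j}\epsilon_k^2$ times the twist on the last four strands into $\sigma_1^{j+1}\epsilon_{k-1}^2$ times the twist on the last four strands, valid while $k\ge 5$.
\item Four such steps give $\sigma_1^4(\sigma_1\sigma_2\sigma_3)^{-2}\in B_4$.
\item Since $(\sigma_1\sigma_2\sigma_3)^{2}=\sigma_2\sigma_1\sigma_2^2\sigma_3\sigma_2$, the letter $\sigma_3$ occurs once, and one negative destabilization removes $\sigma_3^{-1}$.
\item The relation $\sigma_2^{-3}\sigma_1^{-1}\sigma_2^{-1}=\sigma_1^{-1}\sigma_2^{-1}\sigma_1^{-3}$ then reduces what is left to $\sigma_1^{-1}\sigma_2^{-1}\sigma_1\sim\sigma_2^{-1}\in B_3$. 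Its closure is visibly the two-component unlink.
\end{itemize}

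\textbf{The paper's route.} The paper avoids computation entirely. It obtains the lemma as the case $n=2$ of Lemma~\ref{lemma: 4n,n,2n,-1}. The two components are parallel $(4,1)$ curves cobounding an annulus on the torus, and that annulus meets the twisting disk in two spanning arcs. After the twist, the annulus has core $T(4,1,2,-1)$, which is an unknot, and framing $4-2^2=0$. The boundary of an unknotted $0$-framed annulus is the unlink.
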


\begin{proof}
This follows as the special case \(n=2\) of Lemma~\ref{lemma: 4n,n,2n,-1}.
\end{proof}

\begin{lemma}\label{lem:2n22n-unlink}
For every integer \(n\ge2\), the twisted torus link \(T(2n,2,2,-n)\) is the two-component unlink.
\end{lemma}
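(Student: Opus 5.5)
We plan to reduce \(T(2n,2,2,-n)\) to a torus link via Lemma~\ref{lemma:torus link in twisted torus link}. Taking \(p=2n\), \(q=2\), \(s=-n\) (so \(r=q=2\)), that lemma gives
\[
T(2n,2,2,-n)=T\bigl(2,\,2n+2(-n)\bigr)=T(2,0).
\]
So the whole proof comes down to identifying \(T(2,0)\).

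The torus link \(T(2,0)\) is the closure of the trivial \(2\)-strand braid \((\sigma_1)^{0}\). Its closure consists of two parallel, unlinked circles, which is exactly the two-component unlink.

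To be safe with respect to the paper's nonstandard crossing convention, we will also check this directly on the braid. By definition, \(T(2n,2,2,-n)\) is the closure of
\[
(\sigma_1\sigma_2\cdots\sigma_{2n-1})^{2}(\sigma_1^{-1})^{2n}.
\]
Using Lemma~\ref{lemma1} to swap \(p\) and \(q\), we may instead work with \(T(2,2n,2,-n)\). Its braid is \(\sigma_1^{2n}\sigma_1^{-2n}\), which is the identity in the \(2\)-strand braid group. Its closure is therefore two unlinked trivial circles.

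The only point requiring care is that Lemma~\ref{lemma:torus link in twisted torus link} (and Lemma~\ref{lemma1}) are stated for general integer \(s\), including negative \(s\), and for the degenerate torus type \((2,0)\). We expect this to be unproblematic: the braid identity in the proof of Lemma~\ref{lemma:torus link in twisted torus link} holds verbatim for negative exponents, and \(\sigma_1^{0}=1\).

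As a consistency check, the linking number is \(\tfrac{pq}{4}+\tfrac{r^2 s}{4}=n-n=0\), in agreement with Proposition~\ref{prop:linkingnumber}.
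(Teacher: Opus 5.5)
Your proposal is correct and is essentially the paper's argument: the paper deduces this lemma from Lemma~\ref{lem:mnmmn-unlink}, whose proof is exactly your application of Lemma~\ref{lemma:torus link in twisted torus link} to get $T(m,0)$, here with $m=2$. Your direct check that the braid of $T(2,2n,2,-n)$ is $\sigma_1^{2n}\sigma_1^{-2n}=1$ is the same computation written out explicitly.
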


\begin{proof}
It follows from Lemma~\ref{lem:mnmmn-unlink}.
\end{proof}
         
By the preceding lemmas, each of the cases listed in Proposition~\ref{prop:twocomponents}
indeed yields an unlink.

Now we show that when the number of twisted strands exceeds the
number of strands in the underlying torus link, the resulting twisted torus link can never be an unlink.

\begin{prop}\label{prop:r>p}
Let \(p,q,r,s\) be integers such that \(\gcd(p,q)=d\geq 2\), \(1\le q<p\), \(2\le p<r\le p+q\), and \(s\neq0\).
Then the twisted torus link \(T(p,q,r,s)\) is not an unlink.
\end{prop}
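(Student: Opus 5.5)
We will show that, when \(p<r\le p+q\), some two components of \(T(p,q,r,s)\) have nonzero linking number; this rules out the unlink. The first step is to get a braid model in which the twisted region is visible. We use Lemma~\ref{lemma1} to pass to \(T(q,p,r,s)\), and present \(T(p,q)\) as the closure of the \(q\)-strand braid \((\sigma_1\cdots\sigma_{q-1})^{p}\). We then read off Figure~\ref{fig:twistedtorusrlarge} directly. The twisting disk meets the torus link in \(r\) points. In the standard picture on the torus, these \(r\) points are \(p\) parallel arcs in one direction together with \(r-p\le q\) further arcs. The \(s\) full twists on them act as a power of the full twist on an \(r\)-element set of parallel strands, after an isotopy. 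This isotopy turns the disk into a round disk meeting a braid axis transversally, so that the link becomes the closure of a braid on \(p+q\) or fewer strands containing a factor \(\Delta_r^{2s}\).

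The key step is the linking number computation. Exactly as in Proposition~\ref{prop:linkingnumber}, each component \(L_i\) of the \((p,q)\)-torus link has \(p/d\) longitudinal and \(q/d\) meridional strands. Any two distinct components satisfy
\[
\operatorname{lk}_{\mathrm{torus}}(L_i,L_j)=\frac{pq}{d^2}>0.
\]
Let \(a_i\) be the number of the \(r\) strands in the twisting disk that belong to \(L_i\). A full twist contributes \(a_ia_j\) to \(\operatorname{lk}(L_i,L_j)\), so
\[
\operatorname{lk}(L_i,L_j)=\frac{pq}{d^2}+s\,a_ia_j.
\]
Since \(r>p\), the disk meets all \(p\) longitudinal strands. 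Each component therefore contributes at least \(p/d\) of them, giving \(a_i\ge p/d\) for every \(i\). The disk also meets \(r-p\ge1\) additional strands, so some \(a_j\ge p/d+1\). For \(s>0\) every linking number is positive. For \(s<0\), vanishing of all pairwise linking numbers would force \(a_ia_j=pq/(d^2|s|)\) for all \(i\ne j\). With \(d\ge3\) this forces all \(a_i\) to be equal; with \(d=2\) it gives a single equation.

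We expect the main obstacle to be the case \(d=2\), or more generally when the \(a_i\) are equal. There the linking number alone may vanish: we need \(a_1a_2|s|=pq/4\) with \(a_1,a_2\ge p/2\), which gives \(|s|\,a_1a_2\ge |s|p^2/4\), hence \(q\ge |s|p\ge p\). This contradicts \(q<p\). For general \(d\), the same estimate \(a_ia_j\ge p^2/d^2\) gives \(|s|p^2/d^2\le pq/d^2\), so \(|s|p\le q<p\), a contradiction in every case. The inequality \(a_i\ge p/d\) therefore carries the whole argument. The real work is justifying it rigorously from the definition of the twist when \(r>p\): we must check that the disk of Figure~\ref{fig:twistedtorusrlarge} meets each of the \(p\) parallel strands exactly once and coherently oriented, so that the contributions have the stated sign. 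We would settle this by an explicit description of the disk as a neighborhood of a \((1,1)\)-type arc on the torus, and by checking the orientations of the crossing strands against the paper's convention.
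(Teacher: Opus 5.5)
Your argument is correct, but it takes a different route from the paper. The paper uses only a pigeonhole count. It identifies each component \(L_i\) with the twisted torus knot \(T(p/d,q/d,r_i',s)\), observes that \(\sum_i r_i'=r>p\) forces some \(r_i'>p/d\), and then invokes Lee's classification (Theorem~\ref{lee}) to conclude that this component is knotted, since every unknotted example there has third parameter at most the first.

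You instead show that every pair of components has nonzero linking number. From \(\operatorname{lk}(L_i,L_j)=pq/d^2+s\,a_ia_j\) and \(a_i\ge p/d\) for all \(i\), you get \(|s|\,a_ia_j\ge p^2/d^2>pq/d^2\). So the sum cannot vanish whatever the sign of \(s\); the twist sign convention, and your separate \(s>0\) case, are irrelevant. This route is elementary, avoids Lee's theorem entirely, and gives a stronger conclusion: no pair of components is even algebraically split.

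The price is more geometric input. You need a lower bound on every \(a_i\), not just on one of them. You also need all \(r\) strands to pass coherently oriented through the disk, so that \(\operatorname{lk}(L_i,\partial D)=a_i\). Both follow quickly from the model you suggest. Let the disk meet the torus in a straight arc transverse to the linear foliation containing the link, of the slope whose closed curve meets the link in \(p+q\) points. Then all intersection signs agree, and consecutive intersection points cycle through the \(d\) components. Hence \(a_i\in\{\lfloor r/d\rfloor,\lceil r/d\rceil\}\), and \(\lfloor r/d\rfloor\ge p/d\) as soon as \(r>p\). The paper's proof silently relies on the same picture to know that each component is a standard twisted torus knot with \(r_i'\le p/d+q/d\).

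Your opening paragraph is not needed: passing to \(T(q,p,r,s)\) and building a braid on at most \(p+q\) strands with a factor \(\Delta_r^{2s}\) plays no role. The linking computation only uses the twisting formula along \(\partial D\).
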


\begin{proof}
Assume first that \(p>2\). The twisted torus link \(L = T(p,q,r,s)\) has \(d = \gcd(p,q)\geq 2\) components.

In the standard braid description of \(L\), each component of the underlying
\((p,q)\)-torus link is represented by exactly \(p/d\) strands.
The twisting block acts on \(r\) adjacent strands, which are distributed among the
\(d\) components. Thus each component \(L_i\) inherits a twisting on
\(r_i'\) strands, where
\[
0\le r_i' \le \frac{p}{d}+\frac{q}{d},
\qquad
r_1'+\cdots+r_d'=r,
\]
and each component \(L_i\) is itself a twisted torus knot of the form
\[
L_i \;=\; T\!\left(\frac{p}{d},\,\frac{q}{d},\,r_i',\,s\right).
\]

We claim that there exists at least one index \(i\) such that
\[
r_i' > \frac{p}{d}.
\]
Indeed, if \(r_i' \le \frac{p}{d}\) for all \(i\), then
\[
r = r_1'+\cdots+r_d' \;\le\; d\cdot \frac{p}{d} \;=\; p,
\]
which contradicts the hypothesis \(r>p\).

Consequently, at least one component of \(L\) is a twisted torus knot
of the form \(T(p/d,q/d,r_i',s)\) with \(r_i'>p/d\).
By Theorem~\ref{lee}, such a twisted torus knot is never an unknot, since in every unknotted
case listed there the third parameter is at most the first.
Thus at least one component of \(T(p,q,r,s)\) is knotted, and the link cannot be an unlink.

Finally, if \(p=2\), then there is no integer \(q\) satisfying \(1\le q<p\) and \(\gcd(p,q)=2\),
so this case does not occur.
\end{proof}

We are now in a position to summarize the results of this section.
Combining the analysis of the vanishing of the linking number, Lee’s classification of unknotted twisted torus knots, and the exclusion of the range \(r>p\), we obtain a complete classification of twisted torus links with exactly two components that are unlinks.

\begin{thm}\label{thm:classification-unlinks}
Let \(p,q,r,s\) be integers with \(\gcd(p,q)=2\), \(1\le q<p\), \(2\le r\le p+q\), and \(s\neq0\).
Then the twisted torus link \(T(p,q,r,s)\) is an unlink if and only if one of the following holds:
\begin{enumerate}
\item \((p,q,r,s)=(2n+2,2n,2n+1,-1)\) for some integer \(n\ge1\);
\item \((p,q,r,s)=(8,2,4,-1)\);
\item \((p,q,r,s)=(2n,2,2,-n)\) for some integer \(n\ge2\).
\end{enumerate}
\end{thm}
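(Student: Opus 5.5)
The proof assembles results already established in this section, treating the two directions separately and splitting according to whether $r\le p$ or $r>p$.

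\emph{Sufficiency.} For the \emph{if} direction, each listed family is an unlink by an earlier lemma. Family (1) is covered by Lemma~\ref{lemma2}. Family (2) is covered by Lemma~\ref{lem:824-unlink}. Family (3) is covered by Lemma~\ref{lem:2n22n-unlink}. In each case the hypotheses $\gcd(p,q)=2$, $1\le q<p$ and $2\le r\le p$ hold automatically, so nothing further is needed.

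\emph{Necessity.} Suppose $T(p,q,r,s)$ is an unlink, where $\gcd(p,q)=2$ and the parameters lie in the stated range.
\begin{itemize}
\item If $r>p$, then Proposition~\ref{prop:r>p}, applied with $d=2$, shows the link is not an unlink. This is a contradiction, so $r\le p$.
\item With $r\le p$, the link has exactly two components because $\gcd(p,q)=2$. Since it is an unlink, both components are unknots.
\item The linking number of the two components of an unlink is zero, since they bound disjoint disks.
\item These are exactly the hypotheses of Proposition~\ref{prop:twocomponents}. That proposition then forces $(p,q,r,s)$ to lie in one of the three families.
\end{itemize}

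\emph{Main obstacle.} No step here is genuinely hard, because the substantive work is in Proposition~\ref{prop:twocomponents} and Proposition~\ref{prop:r>p}. The only delicate point is a range check. Proposition~\ref{prop:r>p} requires $p\ge 2$, which holds, and $p<r\le p+q$, which is exactly our case. Together with the complementary case $2\le r\le p$, this covers the whole range $2\le r\le p+q$ assumed in the statement.

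There is one dependency to flag. Families (2) and (3) rely on lemmas proved later in the paper (Lemma~\ref{lemma: 4n,n,2n,-1} and Lemma~\ref{lem:mnmmn-unlink}). The argument is therefore complete only once those are established. Their proofs are independent of this theorem, so there is no circularity.
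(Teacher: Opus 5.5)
Your proposal is correct and matches the paper's proof essentially step for step. Necessity splits into the case $r>p$, which Proposition~\ref{prop:r>p} rules out, and the case $r\le p$, where two unknotted components with zero linking number feed into Proposition~\ref{prop:twocomponents}; sufficiency cites Lemmas~\ref{lemma2}, \ref{lem:824-unlink} and \ref{lem:2n22n-unlink}, and your note on their forward dependence on later, non-circular lemmas reflects how the paper is organized.
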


\begin{proof}
Let \(p,q,r,s\) satisfy \(\gcd(p,q)=2\), \(1\le q<p\), \(2\le r\le p+q\), and \(s\neq0\).

\medskip
\noindent\emph{(\(\Rightarrow\))}  
Suppose that \(T(p,q,r,s)\) is an unlink. Since \(\gcd(p,q)=2\), the underlying torus link
\(T(p,q)\) has exactly two components, and hence so does \(T(p,q,r,s)\).
In particular, each component is an unknot and the linking number between the two components
is zero.

If \(r\le p\), Proposition~\ref{prop:twocomponents} applies and shows that
\(T(p,q,r,s)\) must belong to one of the following three families:
\[
(2n+2,2n,2n+1,-1),\qquad (8,2,4,-1),\qquad (2n,2,2,-n).
\]

If instead \(p<r\le p+q\), Proposition~\ref{prop:r>p} implies that
\(T(p,q,r,s)\) cannot be an unlink, contradicting our assumption.
Therefore \(r\le p\), and \(T(p,q,r,s)\) must be one of the three families listed above.

\medskip
\noindent\emph{(\(\Leftarrow\))}  
Conversely, suppose that \(T(p,q,r,s)\) belongs to one of the three families in the statement.
By Lemma~\ref{lem:2n22n-unlink}, the links \(T(2n,2,2,-n)\) are unlinks for all \(n\ge2\).
By Lemma~\ref{lem:824-unlink}, the link \(T(8,2,4,-1)\) is an unlink.
Finally, by Lemma~\ref{lemma2}, the family \(T(2n+2,2n,2n+1,-1)\) consists of unlinks.

Thus each of the listed families yields an unlink, completing the proof.
\end{proof}

\section{Cases with more than two components}
\label{section:Case with more than two components}

In this section we study twisted torus links \(T(p,q,r,s)\) with more than two components,
that is, with \(\gcd(p,q)=d>2\).
Our goal is to determine which such links can be unlinks.

As in the two--component case, a fundamental constraint comes from the fact that every
two--component sublink of an unlink must itself be a two--component unlink.
Thus, after choosing any two components of a twisted torus link with \(d>2\),
the resulting sublink must fall into one of the three families classified in
Proposition~\ref{prop:twocomponents}.
This observation severely restricts the possible parameter choices.

We begin by showing that two--component unlink families from
Proposition~\ref{prop:twocomponents} cannot occur simultaneously as sublinks of the same
twisted torus link with zero linking.
We then use this rigidity, together with an analysis of how the twisting parameter \(r\)
interacts with the \(d\) components of the underlying torus link, to obtain a short list
of possible parameter families.
Finally, we verify that these remaining families do indeed give unlinks. This leads to a complete classification of twisted torus unlinks with three or more
components.

\begin{lemma}\label{lem:no-mixing-unlink-families}
No twisted torus link whose two components have linking number \(0\) can contain, as distinct
two--component sublinks, members of two different families from Proposition~\ref{prop:twocomponents},
namely
\[
(2n+2,2n,2n+1,-1),\qquad (8,2,4,-1),\qquad (2n,2,2,-n).
\]
Equivalently, a twisted torus link with zero linking between its components cannot have
two--component sublinks belonging to two different families in Proposition~\ref{prop:twocomponents}.
\end{lemma}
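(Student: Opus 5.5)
We first make precise what a ``two--component sublink'' of \(L=T(p,q,r,s)\) with \(d=\gcd(p,q)\ge3\) and \(r\le p\) looks like. In the standard braid \((\sigma_1\cdots\sigma_{p-1})^q\) each component occupies the strands in one residue class modulo \(d\), and the twist block is a power of the full twist \(\Delta_r^2\), which does not permute strands. Deleting all components except those in residue classes \(i<j\) leaves a twisted torus link of type \((2p/d,\,2q/d,\,r_i+r_j,\,s)\). Here \(r_i\) is the number of strands of class \(i\) among the first \(r\), and \(r_i=\lceil (r-i+1)/d\rceil\). The point is that the \(2p/d\) surviving strands, relabelled in order, still realise the \((2p/d,2q/d)\) torus braid, and the twisted strands remain an initial segment. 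The crucial structural fact is that all these sublinks share the same \(p'=2p/d\), \(q'=2q/d\) and \(s\), and their \(r\)-values differ by at most \(1\), since \(r_i\in\{\lfloor r/d\rfloor,\lceil r/d\rceil\}\).

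Now suppose two distinct sublinks belong to different families of Proposition~\ref{prop:twocomponents}. Since they share \((p',q',s)\), we compare the families pairwise on these three coordinates.
\begin{itemize}
\item Family~(1) versus family~(3). Family~(1) has \(q'=p'-2\) and \(s=-1\), while family~(3) has \(q'=2\) and \(s=-n\) with \(n\ge2\). The values of \(s\) already conflict.
\item Family~(2) versus family~(3). Here \((p',q',s)=(8,2,-1)\) against \((2n,2,-n)\) with \(n\ge2\), which forces \(s\le-2\neq-1\).
\item Family~(1) versus family~(2). We need \((2n+2,2n)=(8,2)\), so \(2n+2=8\) and \(2n=2\), which is impossible.
\end{itemize}
Each pair therefore gives a contradiction, and the result follows. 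The zero-linking hypothesis enters only to guarantee that every two-component sublink satisfies the hypotheses of Proposition~\ref{prop:twocomponents}, whose families are defined by unknottedness together with zero linking. Within each family, \((p',q',s)\) also determines \(r\), so distinct sublinks of a single family are automatically consistent.

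The main obstacle is the first step: showing that every pair of components really forms a twisted torus link with the stated parameters. This requires checking that the torus part restricted to two residue classes is isotopic to \(T(2p/d,2q/d)\), with the twisted strands still adjacent and initial. We handle it by viewing the torus link as \(d\) parallel \((p/d,q/d)\) curves on a torus. The twist disk meets the strands consecutively, so it meets each curve in an initial block of that curve's strands. Any two parallel curves then form \(T(2p/d,2q/d)\), with the disk meeting their strands interleaved, which is exactly the twisted torus configuration on \(r_i+r_j\) strands.

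We also need to rule out \(r>p\), which is immediate from the argument of Proposition~\ref{prop:r>p}, where some component is knotted. Finally, we must confirm that the constraint \(r_i+r_j\le 2p/d\), needed to apply Proposition~\ref{prop:twocomponents}, holds; it does, because \(r_i\le p/d\) for every \(i\).
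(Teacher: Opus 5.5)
Your proof is correct and follows the same idea as the paper: all two-component sublinks share the torus data $(2p/d,2q/d)$ (equivalently, every component has the same $(p/d,q/d)$) and the same twist parameter $s$, and the three families are pairwise incompatible on these coordinates. For families (1) and (3) your use of $s$ is in fact more careful than the paper's argument, which writes $(n+1,n)=(n,1)$ with a single $n$; with independent parameters, $(4,2,3,-1)$ and $(4,2,2,-2)$ share $(p',q')=(4,2)$ and are distinguished only by $s=-1\neq-2$.
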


\begin{proof}
Recall that the two components of the links in Proposition~\ref{prop:twocomponents} are as follows:
\begin{itemize}
\item For \((2n+2,2n,2n+1,-1)\), the components are the twisted torus knots
\((n+1,n,n+1,-1)\) and \((n+1,n,n,-1)\).
\item For \((8,2,4,-1)\), both components are \((4,1,2,-1)\).
\item For \((2n,2,2,-n)\), both components are the torus knot \(T(n,1)\) (equivalently, the
twisted torus knot \((n,1,1,-n)\)); in particular, each component has \((p,q)=(n,1)\).
\end{itemize}

Suppose, toward a contradiction, that a twisted torus link \(L\) with two components and zero linking
contains as two--component sublinks representatives from two different families listed above.
Then \(L\) contains components whose \((p/2,q/2)\)-data disagree, since:
\begin{itemize}
\item in the first family, the components have \((p/2,q/2)=(n+1,n)\);
\item in the second family, the components have \((p/2,q/2)=(4,1)\);
\item in the third family, the components have \((p/2,q/2)=(n,1)\).
\end{itemize}
In particular, if \(L\) contained a component of type \((n+1,n,\ast,-1)\) and also a component of type
\((4,1,2,-1)\), then we would have \((n+1,n)=(4,1)\), which is impossible. Likewise, if \(L\) contained
components of types \((4,1,2,-1)\) and \((n,1,\ast,-n)\), then we would need \((4,1)=(n,1)\), hence
\(n=4\), but then the corresponding twisting parameters \(s\) would be \(-1\) and \(-4\), contradicting
that both arise as components of the same zero--linking twisted torus link (whose components share the
same twisting parameter \(s\)). Finally, if \(L\) contained components from the first and third families,
then we would need \((n+1,n)=(n,1)\), which is impossible for \(n\ge1\).

Therefore no such \(L\) can contain two--component sublinks belonging to two different families in
Proposition~\ref{prop:twocomponents}.
\end{proof} 

\begin{prop}\label{morecomponents}
Let \(p,q,r,s\) be integers with \(\gcd(p,q)=d>2\), \(1\le q<p\), \(2\le r\le p\), and \(s\neq0\).
Assume that \((p,q,r,s)\) is an unlink with at least three components.
Then necessarily \(r\ge d\), and \((p,q,r,s)\) must belong to one of the following two families:
\begin{enumerate}
\item \((p,q,r,s)=(4n,n,2n,-1)\) for some integer \(n\ge 3\);
\item \((p,q,r,s)=(mn,m,m,-n)\) for some integers \(m\ge 3\) and \(n\ge 2\).
\end{enumerate}
In particular, no other parameter choice with \(\gcd(p,q)>2\) can yield an unlink with three or more components.
\end{prop}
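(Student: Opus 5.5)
Write \(p=dp_0\), \(q=dq_0\) with \(\gcd(p_0,q_0)=1\). In the braid \((\sigma_1\cdots\sigma_{p-1})^q\) the strand indices are partitioned by residue mod \(d\): strand \(j\) belongs to component \(L_{j \bmod d}\), and each component occupies \(p_0\) strands. The twist block \((\Delta_r^2)^s\) is a pure braid, so it does not change this partition. Write \(r=ad+b\) with \(0\le b<d\). Then \(b\) of the components have \(a+1\) strands in the twisted block and the remaining \(d-b\) have \(a\) strands.

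The first step is to fix notation for pairs of components. For \(i\neq j\), the sublink \(L_i\cup L_j\) is the closure of the \(2p_0\) strands in classes \(i,j\). In the torus part, each pair of components contributes \(2p_0q_0\) positive crossings, so the torus contribution to the linking number is \(p_0q_0\). The twist block contributes \(s\,r_ir_j\), where \(r_i\in\{a,a+1\}\) is the number of strands of \(L_i\) in the block. Each sublink \(L_i\cup L_j\) is a two-component unlink. In particular \(\operatorname{lk}(L_i,L_j)=p_0q_0+s\,r_ir_j=0\), so \(s<0\) and \(r_ir_j\neq 0\) for every pair. This immediately forces \(a\ge1\), that is, \(r\ge d\).

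Next, each component \(L_i\) is a twisted torus knot \(T(p_0,q_0,r_i,s)\) (or a torus knot when \(r_i\le1\)), and it must be unknotted. Since \(d\ge3\), at least two components share the same value of \(r_i\), and if \(b\ne0\) some pair has values \(a\) and \(a+1\). Split into two cases.

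\textbf{Case \(b=0\).} Every pair gives \(p_0q_0=-a^2s\). The two-strand-class sublink behaves exactly like the situation of Proposition~\ref{prop:linkingnumber}(1): the component knot is \((p_0,q_0,a,s)\), and it is unknotted with \(p_0q_0=-a^2s\). Rerun the analysis of Proposition~\ref{prop:twocomponents}.
\begin{itemize}
\item If \(a=1\), the components are torus knots \(T(p_0,q_0)\), so \(q_0=1\) and \(p_0=-s\). Put \(n=-s\) and \(m=d\). Then \((p,q,r,s)=(mn,m,m,-n)\).
\item If \(a\ge2\), Theorem~\ref{lee} together with \(p_0q_0=-a^2s\) leaves only family (4), with \(a=2\), \(s=-1\), \(p_0=4\), \(q_0=1\). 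This gives \((4d,d,2d,-1)\), i.e.\ \((4n,n,2n,-1)\) with \(n=d\ge3\).
\end{itemize}

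\textbf{Case \(b\neq0\).} Since \(d\ge3\), there are two pairs of components whose products \(r_ir_j\) differ. Either \(b\ge2\) and we get pairs \(\{a+1,a+1\}\) and \(\{a,a+1\}\), or \(d-b\ge2\) and we get pairs \(\{a,a\}\) and \(\{a,a+1\}\). In both situations the two linking-number equations \(p_0q_0=-s\,r_ir_j\) are incompatible, because \(a(a+1)\) differs from both \(a^2\) and \((a+1)^2\) when \(a\ge1\). So this case is impossible. This is the cleanest route, and it makes Lemma~\ref{lem:no-mixing-unlink-families} unnecessary, though that lemma may serve as a sanity check.

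The main obstacle I expect is the inspection of Theorem~\ref{lee} in the case \(a\ge2\). I need to verify that no family other than (4) satisfies \(p_0q_0=-a^2s\) with \(r=a\). For instance, family (3) gives \((mn\pm1)n=mn^2\), which is impossible. Family (5) would require \((4n\pm1)n=4n^2\), also impossible. Families (1), (2), (6) and (7) fail because consecutive or near-consecutive integers, or Fibonacci identities such as \(f_{n+1}f_{n-1}-f_n^2=\pm1\), are never equal. Finally, I should note the boundary convention \(T(p_0,q_0,1,s)=T(p_0,q_0)\) used in the \(a=1\) case.
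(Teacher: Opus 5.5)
Your proof is correct, but it takes a different and more direct route than the paper.

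\textbf{The paper's route.} It first shows $r\ge d$: a component missing the block, paired with one meeting it, gives the torus link $T(2p_0,2q_0)$, which has linking number $p_0q_0\neq 0$. It then notes that every pair $L_i\cup L_j$ is the twisted torus link $T(2p_0,2q_0,r_i+r_j,s)$, so each pair must be one of the three families of Proposition~\ref{prop:twocomponents}. Next it excludes the family $(2N+2,2N,2N+1,-1)$ by a pigeonhole argument on component types. Finally it invokes Lemma~\ref{lem:no-mixing-unlink-families} to put all pairs in a single family and reads off the parameters.

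\textbf{Your route.} You write $r=ad+b$ and impose $p_0q_0+s\,r_ir_j=0$ on all pairs simultaneously. This gives $s<0$ and $r\ge d$ at once. Since $a(a+1)$ differs from both $a^2$ and $(a+1)^2$, it also forces $b=0$: $d\mid r$ and every component carries the same number $a$ of twisted strands. A single inspection of Lee's list for the knot $T(p_0,q_0,a,s)$ under $p_0q_0=-a^2s$ then finishes. Your checks there are right, including Cassini's identity for family (7).

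\textbf{Comparison.} Your $b\neq 0$ step subsumes the paper's Step~3, since an odd restricted twist parameter $r_i+r_j$ is exactly a mixed pair $\{a,a+1\}$. It also makes Lemma~\ref{lem:no-mixing-unlink-families} unnecessary. In addition, it makes explicit the divisibility $d\mid r$ that the paper uses implicitly when it writes the restricted twist parameter as $2r/d$ in Cases~A and~B. The paper's approach has the structural merit of reusing the two-component theorem through ``sublinks of unlinks are unlinks.'' Yours is shorter and self-contained, needing only the linking-number computation and Theorem~\ref{lee}.
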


\begin{proof}
Let \(d=\gcd(p,q)>2\). The underlying torus link \(T(p,q)\) has \(d\) components, each represented (in the standard braid picture) by \(p/d\) strands.

\medskip
\noindent\textbf{Step 1: reduction to \(r\ge d\).}
If \(r<d\), then among the \(d\) components of \(T(p,q)\) there exists at least one component whose strands are disjoint from the block of \(r\) adjacent strands on which we perform the twisting. Take this component and also one component that intersects the block. The two--component sublink determined by these components 
is a torus link \(T(2p/d,\,2q/d)\), whose linking number equals \(\frac{pq}{d^{2}}\neq0\). Hence this two--component sublink is not an unlink, contradicting the assumption that \((p,q,r,s)\) is an unlink. Therefore \(r\ge d\).

\medskip
\noindent\textbf{Step 2: every two--component sublink must be one of the three families in Proposition~\ref{prop:twocomponents}.}
Since \((p,q,r,s)\) is an unlink with at least three components, every two--component sublink is a two--component unlink. Moreover, under the assumption \(r\ge d\), the twisting involves strands from any chosen pair of components, so each two--component sublink is itself a twisted torus link with parameters obtained by restricting to those two components. In particular, after restriction we are in the two--component setting covered by Proposition~\ref{prop:twocomponents}. Hence every two--component sublink must be one of the three unlink families
\[
(2N+2,2N,2N+1,-1),\qquad (8,2,4,-1),\qquad (2N,2,2,-N).
\]

\medskip
\noindent\textbf{Step 3: Exclusion of the family \((2n+2,2n,2n+1,-1)\) for links with at least three components.}

Suppose that a two--component sublink of \((p,q,r,s)\) is of type
\((2n+2,2n,2n+1,-1)\).
The two components of such a link are the twisted torus knots
\((n+1,n,n+1,-1)\) and \((n+1,n,n,-1)\).

Assume first that \((p,q,r,s)\) has exactly three components.
Since every two--component sublink must again be of type \((2n+2,2n,2n+1,-1)\),
each pair of components must consist of one component of type
\((n+1,n,n+1,-1)\) and one of type \((n+1,n,n,-1)\).
This would force each component to be simultaneously of both types,
which is impossible. Hence this family cannot occur when the link has exactly three components.

Now assume that \((p,q,r,s)\) has more than three components.
Then at least two components must be of the same type, either both
\((n+1,n,n+1,-1)\) or both \((n+1,n,n,-1)\).
Consider two components of type \((n+1,n,n+1,-1)\).
The two--component sublink they determine is then the twisted torus link
\((2n+2,2n,2n+2,-1)\).
By Proposition~\ref{prop:linkingnumber}, the linking number of this link is nonzero,
and hence it is not an unlink. 
This contradicts the assumption that every two--component sublink of \((p,q,r,s)\)
is an unlink. Similar argument also applies when there are two \((n+1,n,n,-1)\)s.

Therefore, no unlink with at least three components can have a two--component
sublink of type \((2n+2,2n,2n+1,-1)\).

\medskip
\noindent\textbf{Step 4: the remaining two families force the stated parameter forms.}
By Lemma~\ref{lem:no-mixing-unlink-families}, a twisted torus link with all two--component sublinks unlinks cannot contain two--component sublinks from different families in Proposition~\ref{prop:twocomponents}. Therefore all two--component sublinks must belong to the \emph{same} remaining family.

\smallskip
\noindent\emph{Case A: all two--component sublinks are \((8,2,4,-1)\).}
For any pair of components, the restriction has torus parameters \((2p/d,\,2q/d)\), twist parameter \(s\), and \(r\)-parameter \((2r/d)\). Thus
\[
\frac{2p}{d}=8,\qquad \frac{2q}{d}=2,\qquad \frac{2r}{d}=4,\qquad s=-1.
\]
Hence \(p=4d\), \(q=d\), \(r=2d\), and \(s=-1\), i.e.
\[
(p,q,r,s)=(4n,n,2n,-1)\quad\text{with }n=d\ge3.
\]

\smallskip
\noindent\emph{Case B: all two--component sublinks are \((2N,2,2,-N)\).}
Similarly we obtain
\[
\frac{2q}{d}=2\ \Rightarrow\ q=d,\qquad \frac{2r}{d}=2\ \Rightarrow\ r=d,
\]
and
\[
\frac{2p}{d}=2N\ \Rightarrow\ p=Nd,\qquad s=-N.
\]
Writing \(m=d\) and \(n=N\), we obtain
\[
(p,q,r,s)=(mn,m,m,-n),
\]
with \(m=d\ge3\) and \(n\ge2\).

\medskip
This proves that the only possible parameter families for an unlink with at least three components (and \(\gcd(p,q)>2\)) are those stated in the proposition.
\end{proof}

\begin{figure}[h]
    \centering
\definecolor{linkcolor0}{rgb}{0.85, 0.15, 0.15}
\definecolor{linkcolor1}{rgb}{0.15, 0.15, 0.85}
\begin{tikzpicture}[line width=2.4, line cap=round, line join=round]
  \begin{scope}[color=linkcolor0]
    \draw (3.43, 2.74) -- (3.29, 2.74) -- (3.29, 2.10) -- (2.67, 2.10) -- (2.67, 7.74) -- (3.95, 7.74) -- (3.95, 6.95) -- (4.10, 6.95);
    \draw (4.10, 6.95) -- (4.71, 6.95);
    \draw (4.71, 6.95) -- (4.88, 6.95);
    \draw (4.88, 6.95) -- (6.56, 6.95);
    \draw (6.56, 6.95) -- (6.69, 6.95);
    \draw (6.69, 6.95) -- (8.30, 6.95);
    \draw (8.30, 6.95) -- (8.46, 6.95);
    \draw (8.46, 6.95) -- (9.35, 6.95) -- (9.35, 6.88);
    \draw (9.35, 6.57) -- (9.35, 0.26) -- (0.72, 0.29) -- (0.72, 8.94) -- (8.30, 8.94) -- (8.30, 7.15);
    \draw (8.30, 6.90) -- (8.30, 6.83);
    \draw (8.30, 6.58) -- (8.30, 0.88) -- (1.30, 0.88) -- (1.30, 8.55) -- (6.56, 8.55) -- (6.56, 7.15);
    \draw (6.56, 6.91) -- (6.56, 6.84);
    \draw (6.56, 6.59) -- (6.56, 4.96);
    \draw (6.56, 4.72) -- (6.56, 4.70) -- (4.96, 4.70);
    \draw (4.96, 4.70) -- (4.77, 4.70);
    \draw (4.77, 4.70) -- (4.00, 4.70);
    \draw (4.00, 4.70) -- (3.85, 4.70) -- (3.85, 3.08);
    \draw (3.85, 2.84) -- (3.85, 2.78);
    \draw (3.85, 2.55) -- (3.85, 1.38) -- (2.00, 1.38) -- (2.00, 8.11) -- (4.74, 8.11) -- (4.74, 7.15);
    \draw (4.74, 6.91) -- (4.74, 6.84);
    \draw (4.74, 6.60) -- (4.74, 5.06);
    \draw (4.74, 4.81) -- (4.74, 4.75);
    \draw (4.74, 4.50) -- (4.74, 3.03);
    \draw (4.74, 2.76) -- (4.74, 2.74) -- (4.01, 2.74);
    \draw (4.01, 2.74) -- (3.85, 2.74);
    \draw (3.85, 2.74) -- (3.43, 2.74);
  \end{scope}
  \begin{scope}[color=linkcolor1]
    \draw (4.10, 6.86) -- (4.10, 6.80) -- (4.72, 6.80);
    \draw (4.72, 6.80) -- (4.88, 6.80);
    \draw (4.88, 6.80) -- (6.56, 6.80);
    \draw (6.56, 6.80) -- (6.69, 6.80);
    \draw (6.69, 6.80) -- (8.29, 6.80);
    \draw (8.29, 6.80) -- (8.45, 6.80);
    \draw (8.45, 6.80) -- (9.31, 6.80);
    \draw (9.31, 6.80) -- (9.55, 6.80) -- (9.55, 0.13) -- (0.53, 0.13) -- (0.53, 9.08) -- (8.45, 9.08) -- (8.45, 7.15);
    \draw (8.45, 6.90) -- (8.45, 6.83);
    \draw (8.45, 6.58) -- (8.45, 0.73) -- (1.10, 0.73) -- (1.10, 8.69) -- (6.70, 8.69) -- (6.70, 7.15);
    \draw (6.70, 6.91) -- (6.70, 6.84);
    \draw (6.70, 6.59) -- (6.70, 4.85);
    \draw (6.70, 4.85) -- (4.00, 4.85) -- (4.00, 4.83);
    \draw (4.00, 4.54) -- (4.00, 3.07);
    \draw (4.00, 2.83) -- (4.00, 2.78);
    \draw (4.00, 2.54) -- (4.00, 1.23) -- (1.79, 1.23) -- (1.79, 8.30) -- (4.95, 8.30) -- (4.95, 7.15);
    \draw (4.95, 6.91) -- (4.95, 6.84);
    \draw (4.95, 6.60) -- (4.95, 5.05);
    \draw (4.95, 4.80) -- (4.95, 4.74);
    \draw (4.95, 4.50) -- (4.95, 2.88);
    \draw (4.95, 2.88) -- (4.01, 2.88);
    \draw (4.01, 2.88) -- (3.85, 2.88);
    \draw (3.85, 2.88) -- (3.43, 2.88);
    \draw (3.43, 2.56) -- (3.43, 1.89) -- (2.48, 1.89) -- (2.48, 7.89) -- (4.10, 7.89) -- (4.10, 7.14);
  \end{scope}
\end{tikzpicture}
\caption{A projection of the embedded annulus, with two boundary components colored red and blue. For any $n$, the link $(4n,n,2n,-1)$ is supported on this annulus. } 
    \label{fig:lq} 
\end{figure}
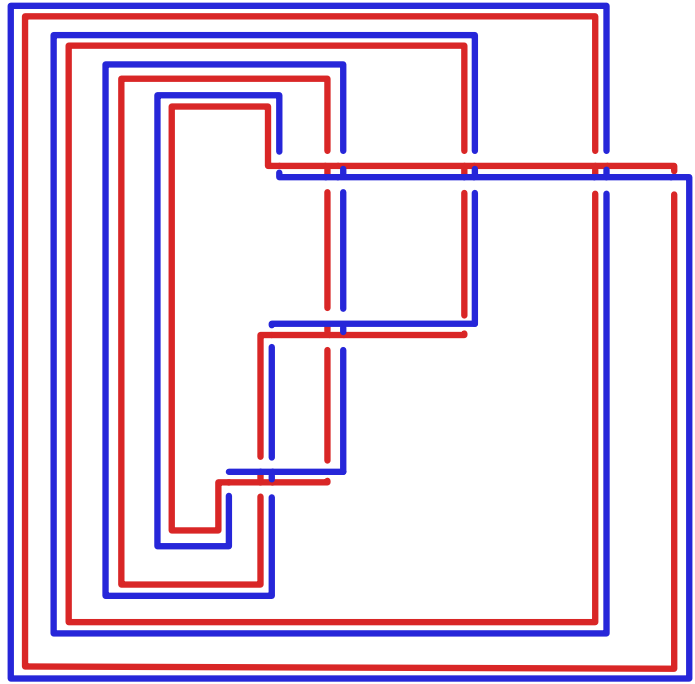

The following lemma admits alternative proofs based on the methods of
\cite[Theorem~1]{cable}, \cite[Theorem~4.3]{de2021satellites}, and
\cite[Theorem~5.4]{de2023hyperbolic}. We include a different proof here for completeness.

\begin{lemma}\label{lemma: 4n,n,2n,-1}
For every integer \(n\ge2\), the twisted torus link \(T(4n,n,2n,-1)\) is the \(n\)-component unlink.
\end{lemma}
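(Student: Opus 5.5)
The plan is to show that \(T(4n,n,2n,-1)\) is a family of \(n\) parallel curves on an embedded annulus whose core is an unknot with framing \(0\); this is the picture in Figure~\ref{fig:lq}. Such curves bound disjoint parallel disks, so the link is the \(n\)-component unlink. Concretely, I would realize the link as the \(n\)-cable, with the appropriate framing, of the twisted torus knot \(T(4,1,2,-1)\). That knot is an unknot by Case~(4) of Theorem~\ref{lee} with \(n=4\).

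\textbf{Step 1: the torus part is a cable.} The torus link \(T(4n,n)\) has \(n=\gcd(4n,n)\) components, each a \((4,1)\)-curve on the standard torus \(\mathcal T\). These components are parallel on \(\mathcal T\). So a thin annulus \(A_0\subset\mathcal T\), with core a \((4,1)\)-curve \(K_0\), contains all of them as parallel copies of \(K_0\). In the braid picture, the \(4n\) strands group into \(4\) consecutive blocks of \(n\) adjacent strands. Each block is one strand of the \(4\)-braid \(\sigma_1\sigma_2\sigma_3\) of \(K_0\), thickened into a flat ribbon. The key point is that the twisting disk, which meets the first \(2n\) adjacent strands, meets exactly the first two ribbons. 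It therefore meets the core \(K_0\) in exactly the first \(2\) of its \(4\) strands.

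\textbf{Step 2: twisting the band.} A full twist on \(2n\) strands that are grouped as two flat ribbons of width \(n\) is the same as a full twist of the two cores, with the ribbons carried along rigidly. Hence performing \(-1\) full twist on the disk sends \(A_0\) to an embedded annulus \(A\). The core of \(A\) is \(T(4,1,2,-1)\), which is an unknot, and the \(n\) components of \(T(4n,n,2n,-1)\) are parallel copies of that core on \(A\). To make the ribbon bookkeeping rigorous I would write an explicit braid identity: \(\Delta_{2n}^2\) equals the cabled \(\Delta_2^2\) composed with full twists \(\Delta_n^2\) inside each of the two ribbons. The framing computation in Step~3 must account for those internal twists.

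\textbf{Step 3: framing computation, the main obstacle.} It remains to check that the framing of \(A\) relative to its core is \(0\). The surface framing of \(A_0\), the torus framing of the \((4,1)\)-curve, is \(pq=4\). A full twist with \(s=-1\) along a disk meeting the core algebraically \(2\) times, all strands oriented the same way, changes the framing of the band by \(s\cdot 2^2=-4\). The total framing is therefore \(4-4=0\). I would double-check this step with a linking-number count, since sign errors are the main risk: any two components of \(T(4n,n,2n,-1)\) must have linking number \(0\). This agrees with Proposition~\ref{prop:linkingnumber}, because for the two-component sublink \((8,2,4,-1)\) one has \(pq=16=-r^2s\). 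A nonzero framing would force nonzero pairwise linking, so this consistency check also independently confirms the framing.

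\textbf{Step 4: conclusion.} With \(A\) an annulus whose core is unknotted and whose framing is \(0\), \(A\) is isotopic to a standard flat annulus in \(S^3\). The \(n\) parallel curves on a standard flat annulus are concentric round circles, which bound disjoint disks. Hence \(T(4n,n,2n,-1)\) is the \(n\)-component unlink.
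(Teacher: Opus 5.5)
Your proposal is correct and is essentially the paper's argument: you realize $T(4n,n,2n,-1)$ as $n$ parallel curves on an embedded annulus whose core is $T(4,1,2,-1)$, an unknot by Case~(4) of Theorem~\ref{lee}, and whose framing is $0$, so the annulus is standard and the curves form the $n$-component unlink. The only difference is that you justify the annulus and its framing explicitly: you build the annulus as the image, under the twist, of the torus annulus containing the parallel $(4,1)$-curves, and you get framing $4-4=0$, cross-checked by $\operatorname{lk}=0$ for the sublink $T(8,2,4,-1)$. The paper instead reads the annulus off Figure~\ref{fig:lq} and only asserts that a linking-number computation gives framing zero.
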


\begin{proof}
Consider the embedded annulus \(A\subset S^{3}\) indicated in Figure~\ref{fig:lq}.
For each \(n\ge2\), choose \(n\) distinct numbers \(0<a_1<\cdots<a_n<1\) and set
\[
\Gamma_n \;=\; \bigsqcup_{j=1}^{n}\bigl(S^{1}\times\{a_j\}\bigr)\subset S^{1}\times I .
\]
Under the embedding \(S^{1}\times I\hookrightarrow S^{3}\) represented by Figure~\ref{fig:lq},
the image of \(\Gamma_n\) is precisely the twisted torus link \(T(4n,n,2n,-1)\).

We claim that \(A\) is ambient isotopic in \(S^{3}\)
to a standardly embedded annulus. It suffices to observe that the core curve is an unknot and its framing corresponding to this annulus is zero.  The core curve of \(A\) is the twisted torus knot
\(T(4,1,2,-1)\), which is an unknot by Theorem~\ref{lee}. The framing can be seen to be zero by computing the linking number.  As a result, under such an ambient isotopy, the link \(\Gamma_n\subset A\) is carried
to \(n\) parallel core circles in a standard annulus. These circles are pairwise unlinked and each bounds
an embedded disk in \(S^{3}\); therefore their union is the \(n\)-component unlink.

Consequently, \(T(4n,n,2n,-1)\), being isotopic to the image of \(\Gamma_n\), is the \(n\)-component unlink.
\end{proof}

\begin{lemma}\label{lem:mnmmn-unlink}
For integers \(m,n\ge2\), the twisted torus link \(T(mn,m,m,-n)\) is the unlink.
\end{lemma}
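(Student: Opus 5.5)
The natural route is Lemma~\ref{lemma:torus link in twisted torus link}, which already identifies twisted torus links whose number of twisted strands equals the second parameter. Here we have $(p,q,r,s)=(mn,m,m,-n)$, so $r=q=m$. Lemma~\ref{lemma:torus link in twisted torus link} then gives
\[
T(mn,m,m,-n)=T\bigl(m,\;mn+m\cdot(-n)\bigr)=T(m,0).
\]
The proof therefore reduces to two things: verifying that the braid identity in that lemma survives when the exponent becomes zero, and recognizing $T(m,0)$ as the $m$-component unlink.

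Concretely, I would first invoke Lemma~\ref{lemma1} to replace $T(mn,m,m,-n)$ by $T(m,mn,m,-n)$. This is a twisted torus link on $m$ strands with twisting on all $m$ strands. Its braid representative is
\[
(\sigma_1\cdots\sigma_{m-1})^{mn}\,(\sigma_{m-1}^{-1}\cdots\sigma_1^{-1})^{mn}.
\]
The second factor is exactly the inverse of $(\sigma_1\cdots\sigma_{m-1})^{mn}$, so the product is the identity braid in $B_m$. The closure of the trivial $m$-strand braid is the $m$-component unlink, and this settles the claim.

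The one point needing care is that Lemma~\ref{lemma1} was only stated as an equivalence in general. We need it to apply in the regime where, after swapping, $r=m$ equals the number of braid strands $p'=m$. In that regime the twist is on all strands, and the $s<0$ braid formula from Section~2 applies with $r\le p'$, so there is no issue.

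A second point is orientation and sign conventions. The full twist on all $m$ strands is $\Delta_m^2=(\sigma_1\cdots\sigma_{m-1})^m$, and the paper writes the negative twist as $(\sigma_{r-1}^{-1}\cdots\sigma_1^{-1})^{-sr}$, which is literally $\Delta_m^{2s}$. So $s=-n$ cancels the torus part exactly, and the two factors are genuine inverses. I expect this bookkeeping, rather than any topological argument, to be the only (minor) obstacle. As a sanity check, the case $m=2$ recovers Lemma~\ref{lem:2n22n-unlink}.
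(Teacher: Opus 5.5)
Your proposal is correct and follows the paper's own proof. You apply the lemma identifying $T(p,q,q,s)$ with $T(q,p+qs)$ to get $T(mn,m,m,-n)=T(m,0)$, and conclude that this is the $m$-component unlink. Your explicit observation that, after swapping, the braid is $(\sigma_1\cdots\sigma_{m-1})^{mn}$ times its inverse (the identity in $B_m$) simply spells out the paper's remark that $T(m,0)$ consists of $m$ parallel unlinked circles.
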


\begin{proof}
By Lemma~\ref{lemma:torus link in twisted torus link}, a twisted torus link of the form
\(T(p,q,q,s)\) is equivalent to the torus link \(T(q,p+qs)\).
Applying this with \(p=mn\), \(q=m\), and \(s=-n\), we obtain
\[
T(mn,m,m,-n)=T\bigl(m,\,mn+m(-n)\bigr)=T(m,0).
\]
The torus link \(T(m,0)\) consists of \(m\) parallel, unlinked components, and hence is the
\(m\)-component unlink.
\end{proof}

\begin{thm}\label{thm2:classification-unlinks}
Let \(p,q,r,s\) be integers such that \(\gcd(p,q)=d>2\), \(1\le q<p\), \(2\le r\le p+q\), and \(s\neq0\).
Then \(T(p,q,r,s)\) is an unlink if and only if one of the following holds:
\begin{enumerate}
\item \((p,q,r,s)=(4n,\,n,\,2n,\,-1)\) for some integer \(n\ge 3\);
\item \((p,q,r,s)=(mn,\,m,\,m,\,-n)\) for some integers \(m\ge 3\) and \(n\ge 2\).
\end{enumerate}
In particular, in case (1) the link has \(d=n\) components, and in case (2) it has \(d=m\) components.
\end{thm}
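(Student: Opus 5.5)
The plan is to prove Theorem~\ref{thm2:classification-unlinks} by assembling the results of this section, exactly as Theorem~\ref{thm:classification-unlinks} was assembled in the two-component case. The two directions are handled separately, and the range of \(r\) is split into \(r\le p\) and \(p<r\le p+q\).

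For the forward direction, suppose \(T(p,q,r,s)\) is an unlink with \(d=\gcd(p,q)>2\) components.

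\begin{itemize}
\item If \(p<r\le p+q\), we invoke Proposition~\ref{prop:r>p}, which was stated for arbitrary \(d\ge2\). It says that some component is a twisted torus knot \(T(p/d,q/d,r_i',s)\) with \(r_i'>p/d\). By Theorem~\ref{lee}, no such knot is trivial, since every unknotted case there has third parameter at most the first. This contradicts the link being an unlink, so \(r\le p\).
\item If \(2\le r\le p\), Proposition~\ref{morecomponents} applies directly. It forces \((p,q,r,s)\) to be \((4n,n,2n,-1)\) with \(n\ge3\), or \((mn,m,m,-n)\) with \(m\ge3\), \(n\ge2\).
\end{itemize}

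In each family we should check that \(\gcd(p,q)\) equals \(n\) (respectively \(m\)), so that the constraint \(d>2\) is exactly the stated lower bound on \(n\) or \(m\). This also gives the claimed component counts, since the number of components is \(\gcd(p,q)\).

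For the converse, Lemma~\ref{lemma: 4n,n,2n,-1} shows that \(T(4n,n,2n,-1)\) is the \(n\)-component unlink for all \(n\ge2\), in particular for \(n\ge3\). Lemma~\ref{lem:mnmmn-unlink} shows that \(T(mn,m,m,-n)\) is the torus link \(T(m,0)\), which is the \(m\)-component unlink. We also verify that the hypotheses \(1\le q<p\) and \(2\le r\le p+q\) hold in both families: \(n<4n\), \(2n\le5n\), \(m<mn\) because \(n\ge2\), and \(m\le m+mn\).

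The main obstacle is not in this assembly but in the inputs it relies on. The most delicate is Step~2 of Proposition~\ref{morecomponents}, the claim that restricting to any two components yields a two-component twisted torus link with parameters \((2p/d,2q/d,2r/d,s)\). This requires the twisting block of \(r\) adjacent strands to meet each component in a controlled number of strands, which is exact when \(d\mid r\). A careful write-up would justify the distribution of the \(r\) strands among the components, namely consecutive strands cycling through the components. It would also handle the case where \(d\nmid r\), so that different pairs of components see different restricted values of \(r\). There, one would combine the linking-number computation of Proposition~\ref{prop:linkingnumber} applied pairwise with the rigidity of Lemma~\ref{lem:no-mixing-unlink-families}, and show that unequal distributions force a pair with nonzero linking number. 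I would take the proposition as given but flag this point.
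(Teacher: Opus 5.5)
Your proposal is correct and follows essentially the same route as the paper: Proposition~\ref{prop:r>p} forces $r\le p$, Proposition~\ref{morecomponents} gives the forward direction, and Lemmas~\ref{lemma: 4n,n,2n,-1} and~\ref{lem:mnmmn-unlink} give the converse. The point you flag is real, since the paper's Step~4 of Proposition~\ref{morecomponents} tacitly assumes $d\mid r$. Your proposed repair works: if $d\nmid r$ (with $d\ge 3$ and $r\ge d$), some pair of components has twist counts $(t+1,t)$ and another has $(t,t)$ or $(t+1,t+1)$, and their linking numbers $pq/d^{2}+s\,ab$ differ, so at least one pair has nonzero linking number.
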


\begin{proof}
Write \(d=\gcd(p,q)>2\). Then \(T(p,q,r,s)\) has exactly \(d\) components.

\medskip
\noindent\emph{(\(\Rightarrow\))} Suppose that \(T(p,q,r,s)\) is an unlink. By Proposition~\ref{prop:r>p}, $r\leq p$.

Since \(r\le p\), Proposition~\ref{morecomponents} applies: it shows that under the hypotheses
\(\gcd(p,q)>2\), \(2\le r\le p\), and \(s\neq0\), the only possible parameter families for a twisted torus link
with at least three components whose two--component sublinks are all unlinks are
\[
(p,q,r,s)=(4n,n,2n,-1)\quad\text{for some }n\ge3,
\]
or
\[
(p,q,r,s)=(mn,m,m,-n)\quad\text{for some }m\ge3,\ n\ge2.
\]
Therefore \(T(p,q,r,s)\) must be of one of the two types listed in the statement.

\medskip
\noindent\emph{(\(\Leftarrow\))} Conversely, assume that \((p,q,r,s)\) satisfies one of the two conditions.

If \((p,q,r,s)=(4n,n,2n,-1)\) with \(n\ge3\), then \(T(p,q,r,s)\) is an unlink by
Lemma~\ref{lemma: 4n,n,2n,-1}.

If \((p,q,r,s)=(mn,m,m,-n)\) with \(m\ge3\) and \(n\ge2\), then \(T(p,q,r,s)\) is an unlink by
Lemma~\ref{lem:mnmmn-unlink}.

Thus each of the listed families indeed yields an unlink, completing the proof.
\end{proof}

\section{Classification of Twisted Torus Links That Are Unlinks}
\label{section:Case with more than two components}

We now complete the classification of twisted torus links that are unlinks.
By combining the results obtained for the two--component case with those for links having
three or more components, we arrive at a complete characterization.

\begin{thm}
Let \(p,q,r,s\) be integers such that \(\gcd(p,q)>1\), \(1\le q<p\), \(2\le r\le p+q\), and \(s\neq0\).
Then the twisted torus link \(T(p,q,r,s)\) is an unlink if and only if one of the following holds:
\begin{enumerate}
\item \((p,q,r,s)=(2n+2,\,2n,\,2n+1,\,-1)\) for some integer \(n\ge1\);
\item \((p,q,r,s)=(4n,\,n,\,2n,\,-1)\) for some integer \(n\ge2\);
\item \((p,q,r,s)=(mn,\,m,\,m,\,-n)\) for some integers \(m\ge2\) and \(n\ge2\).
\end{enumerate}
\end{thm}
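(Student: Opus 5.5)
The plan is to split according to the number of components \(d=\gcd(p,q)>1\) and assemble the two classification results already proved. If \(d=2\), Theorem~\ref{thm:classification-unlinks} applies verbatim. If \(d>2\), Theorem~\ref{thm2:classification-unlinks} applies. The remaining work is to check that the union of the parameter lists from these two theorems is exactly the union of families (1)--(3) in the statement.

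For \(d=2\), Theorem~\ref{thm:classification-unlinks} gives three possibilities. The family \((2n+2,2n,2n+1,-1)\), \(n\ge1\), is family (1); here \(\gcd(2n+2,2n)=2\), consistent with \(d=2\). The single link \((8,2,4,-1)\) is family (2) with \(n=2\). The family \((2n,2,2,-n)\), \(n\ge2\), is family (3) with \(m=2\). For \(d>2\), Theorem~\ref{thm2:classification-unlinks} gives \((4n,n,2n,-1)\) with \(n\ge3\), which is family (2) with \(n\ge3\), and \((mn,m,m,-n)\) with \(m\ge3\), which is family (3) with \(m\ge3\). This proves the ``only if'' direction. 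One must also confirm that the hypotheses \(1\le q<p\) and \(2\le r\le p+q\) hold in each family, which is immediate; for instance \(q=m<mn=p\) since \(n\ge2\).

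For the converse, every link in families (1)--(3) is an unlink by Lemma~\ref{lemma2}, Lemma~\ref{lemma: 4n,n,2n,-1}, and Lemma~\ref{lem:mnmmn-unlink} respectively, with no restriction on \(d\). The only bookkeeping point is to match the number of components in each family to the right case. For family (1) it is \(\gcd(2n+2,2n)=2\), as noted above. For \((4n,n,2n,-1)\) we have \(\gcd(4n,n)=n\), so \(n=2\) falls under the two-component theorem and \(n\ge3\) under the \(d>2\) theorem; similarly \(\gcd(mn,m)=m\) for family (3). Families (2) and (3) never have one component, since \(n\ge2\) and \(m\ge2\), so the standing hypothesis \(\gcd(p,q)>1\) holds throughout.

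There is no real obstacle, since this is a union of earlier results. The only step needing care is the index matching at the boundary values, namely that \(n=2\) in family (2) and \(m=2\) in family (3) are exactly the two-component cases \((8,2,4,-1)\) and \((2n,2,2,-n)\). It is also worth remarking that the assumption \(q<p\) loses no generality by Lemma~\ref{lemma1}, and that \(p=q\) cannot occur unless \(p=q=1\), which is excluded because then \(\gcd(p,q)=1\).
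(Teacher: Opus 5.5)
Your proposal is correct and is essentially the paper's proof: split on $d=\gcd(p,q)$, invoke Theorem~\ref{thm:classification-unlinks} for $d=2$ and Theorem~\ref{thm2:classification-unlinks} for $d>2$, identify $(8,2,4,-1)$ and $(2n,2,2,-n)$ as the $n=2$ and $m=2$ members of families (2) and (3), and obtain the converse from the three unlink lemmas. Drop your closing aside, though, because it is false: when $p=q\ge2$ we have $\gcd(p,q)=p>1$, and for example $T(m,m,m,-1)$ is the closure of $(\sigma_1\cdots\sigma_{m-1})^{m}(\sigma_1\cdots\sigma_{m-1})^{-m}=1$, an unlink not on the list, so the restriction $q<p$ does lose generality (this is harmless here only because $q<p$ is a hypothesis of the statement).
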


\begin{proof}
Let \(p,q,r,s\) satisfy \(\gcd(p,q)>1\), \(1\le q<p\), \(2\le r\le p+q\), and \(s\neq0\), and set
\(d=\gcd(p,q)\).
Recall that the twisted torus link \(T(p,q,r,s)\) has exactly \(d\) components.

\medskip
\noindent\emph{(\(\Rightarrow\))}  
Suppose that \(T(p,q,r,s)\) is an unlink.

\smallskip
\noindent\textbf{Case 1: \(d=2\).}
In this case \(T(p,q,r,s)\) has exactly two components.
By the classification of two--component twisted torus unlinks
(Theorem~\ref{thm:classification-unlinks}), the parameters must satisfy
\[
(p,q,r,s)=(2n+2,2n,2n+1,-1),\quad (8,2,4,-1),\quad \text{or}\quad (2n,2,2,-n),
\]
for suitable integers \(n\).
The link \((8,2,4,-1)\) is precisely the case \(n=2\) of the family \((4n,n,2n,-1)\),
and the family \((2n,2,2,-n)\) coincides with \((mn,m,m,-n)\) when \(m=2\).
Thus all possibilities with \(d=2\) are contained in the three families listed in the statement.

\smallskip
\noindent\textbf{Case 2: \(d>2\).}
Then \(T(p,q,r,s)\) has at least three components.
In this situation, Theorem~\ref{thm2:classification-unlinks} applies and implies that
\((p,q,r,s)\) must be of one of the following two forms:
\[
(p,q,r,s)=(4n,n,2n,-1)\quad\text{for some }n\ge3,
\]
or
\[
(p,q,r,s)=(mn,m,m,-n)\quad\text{for some }m\ge3,\ n\ge2.
\]
These correspond exactly to items~(2) and~(3) in the statement of the theorem.

\medskip
This proves the forward implication.

\medskip
\noindent\emph{(\(\Leftarrow\))}  
Conversely, if \((p,q,r,s)\) belongs to one of the three families listed above, then
\(T(p,q,r,s)\) is an unlink by the same results cited above.
\end{proof}

\bibliographystyle{acm}
\bibliography{reference.bib}


 \end{document}